\documentclass[11pt]{article}

\usepackage[T1]{fontenc}
\usepackage[margin=2.5cm]{geometry}
\usepackage{graphicx}

\usepackage{amsthm}
\usepackage{amsmath}
\usepackage{amssymb}

\usepackage{mathtools}
\usepackage{enumitem}

\usepackage{hyperref}
\usepackage{cleveref}

\usepackage{algorithm}
\usepackage{algpseudocode}

\algrenewcommand\algorithmicrequire{\textbf{Input:}}
\algrenewcommand\algorithmicensure{\textbf{Output:}}
\algnewcommand\algorithmicforeach{\textbf{for each}}
\algdef{S}[FOR]{ForEach}[1]{\algorithmicforeach\ #1\ \algorithmicdo}

\newtheorem{theorem}{Theorem}[section]
\newtheorem{lemma}[theorem]{Lemma}
\newtheorem{proposition}[theorem]{Proposition}
\newtheorem{corollary}[theorem]{Corollary}
\newtheorem{definition}[theorem]{Definition}
\newtheorem{remark}[theorem]{Remark}

\newtheorem{question}{Question}

\usepackage{tikz}
\usetikzlibrary{arrows.meta}

\tikzset{
  vtx/.style = {circle, draw, fill=white, minimum size=4.8mm,
                inner sep=0pt, font=\tiny},
  sel/.style = {vtx, fill=black!70, text=white, font=\tiny\bfseries},
  ann/.style = {font=\tiny, inner sep=1.2pt},
  frc/.style = {-{Latex[length=1.6mm,width=1.2mm]}, semithick,
                shorten >=1.0pt, shorten <=1.0pt}
}

\usepackage[english]{babel}
\usepackage{csquotes}

\usepackage[
    sorting=ynt,
    maxbibnames=99,
    minbibnames=99,
    maxcitenames=99,
    mincitenames=99
]{biblatex}

\usepackage{authblk}

\usepackage{xcolor}

\title{Fort Abundance in Zero Forcing}

\author[1,2]{Aida Abiad}
\author[3,4]{Sina Ghasemi Nezhad}

\affil[1]{Department of Mathematics and Computer Science, Eindhoven University of Technology, The Netherlands}
\affil[2]{Department of Mathematics and Data Science, Vrije Universiteit Brussel, Belgium}

\affil[3]{Department of Computer Science, Vrije Universiteit Amsterdam, The Netherlands}
\affil[4]{Informatics Institute, Faculty of Science, University of Amsterdam, The Netherlands}

\affil[ ]{\texttt{a.abiad.monge@tue.nl}, \texttt{s.ghaseminezhad@student.vu.nl}}

\date{}

\begin{document}

\maketitle


\begin{abstract}
This paper paper concerns the study of forts, the sets that obstruct zero forcing. We show that every block graph on $n$ vertices has at least $n/3$ minimal forts, extending a recent bound for trees by Cameron and and Li (arXiv:2512.12874). The number of forts, and with it the number of compatible collections, grows exponentially for every tree, every connected non-star graph of bounded degree, and every connected graph of linear minimum degree, answering a  question in the negative by Hicks et al. (INFORMS Journal on Computing 2022) for these graph classes. We finish by counting the minimal forts of a tree exactly, in linear time and space.\\

\noindent \textbf{Keywords:} zero forcing; minimal fort; block graph; tree.

\noindent \textbf{MSC (2020):} 05C50; 05C85; 68R10.
\end{abstract}

\section{Introduction}\label{sec:intro}

Zero forcing was introduced to bound the maximum nullity of symmetric matrices described by a graph~\cite{aim2008zero,hicks2022computational,brimkov2021improved} and was found independently in quantum information theory as graph infection~\cite{burgarth2007full} and in graph searching as fast-mixed search~\cite{yang2013fast,fallat2016complexity}. Let $G=(V,E)$ be a graph and let $S\subseteq V$ be a set of initially blue vertices, the remaining vertices being white. The \emph{color change rule} allows a blue vertex $u$ to \emph{force} a white vertex $v$ to become blue when $v$ is the unique white neighbor of $u$. Blue vertices remain blue. The set $S$ is a \emph{zero forcing set} if repeated application of the rule colors every vertex of $G$ blue, and the \emph{zero forcing number} $Z(G)$ is the minimum cardinality of a zero forcing set. Computing $Z(G)$ is known to be NP-hard~\cite{aazami2008hardness}. 

In this paper we investigate the obstruction structure of zero forcing. A \emph{fort} is a non-empty set of vertices no vertex outside which has exactly one neighbor inside; a fort is \emph{minimal} if none of its proper subsets is a fort. Forts characterize zero forcing sets exactly: a set $S$ is a zero forcing set if and only if its complement contains no fort~\cite{brimkov2019computational}. Minimal forts therefore drive the set covering and integer programming formulations used in computational zero forcing, and they carry a growing structural load, for instance in bounding zero forcing numbers of graph products and in the study of maximum nullity through compatible collections of forts~\cite{hicks2022computational,furst2025compatible}.

We contribute with three results in this direction. The first extends the recent lower bound of Cameron and Li~\cite{cameron2025minimal} from trees to all block graphs: every block graph on $n$ vertices has at least $n/3$ minimal forts (Theorem~\ref{thm:block-forts}). The second takes up Question~1 of Hicks et al.~\cite{hicks2022computational}, which asks which graphs admit only polynomially many compatible sets of minimal forts. Furst, Hutchens, Mitchell, and Zhang~\cite{furst2025compatible} showed that the minimum-rank bound motivating that question in fact requires compatible collections of \emph{arbitrary} forts, and it is the corrected question that we study. The number of forts, and hence the number of compatible collections of forts, grows exponentially for every tree, every connected non-star graph of bounded maximum degree, and every connected graph of minimum degree linear in $n$, so the corrected question has a negative answer on each of these classes (Corollary~\ref{cor:resolution}). Whether families of unbounded maximum degree and sublinear minimum degree can behave differently we do not know. This should be contrasted with the count of \emph{minimal} forts, which can be polynomial and whose extremal behaviour was studied by Becker, Cameron, Hanely, Ong, and Previte~\cite{becker2025number}. The third is an exact counting algorithm; dynamic programming over a rooted tree computes the number of minimal forts of any tree in linear time and linear space (Theorem~\ref{thm:dp-alg}), complementing the bounds and the enumeration approach of Nguyen and Kenter~\cite{dat2026number}.

\noindent\textbf{Notation.} We write $\operatorname{Fort}(G)$ to denote the family of forts of $G$, and $\operatorname{Fort}_{\min}(G)$ for its inclusion-minimal members. A nonempty collection $\mathcal{C}\subseteq\operatorname{Fort}(G)$ is \emph{compatible} if, for every two distinct $F_1,F_2\in\mathcal{C}$ and every $v\in F_1\cap F_2$, there exists $F_3\in\mathcal{C}$ such that $ F_3\subseteq (F_1\cup F_2)\setminus\{v\}$. Let
$$
    f(G)=|\operatorname{Fort}(G)|, \qquad f_m(G)=|\operatorname{Fort}_{\min}(G)|,
$$
and let $\operatorname{cs}(G)$ and $\operatorname{cs}_m(G)$ denote the numbers of nonempty compatible sub-collections of $\operatorname{Fort}(G)$ and $\operatorname{Fort}_{\min}(G)$, respectively.

\section{Zero Forcing Minimal Forts of Block Graphs}

Cameron and Li~\cite{cameron2025minimal} proved that every tree of order $n$ has at least $n/3$ minimal forts.

\begin{lemma}\cite{cameron2025minimal}\label{lem:tree-forts}
    If $T$ is a tree of order $n$, then $T$ has at least $n/3$ minimal forts.
\end{lemma}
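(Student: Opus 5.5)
We prove the bound by strong induction on $n$. The small cases are checked directly. A single vertex has the fort $\{v\}$. For $K_2$, the set $\{u,v\}$ is a fort while $\{u\}$ is not, since $v$ would have exactly one neighbour inside; so there is at least one minimal fort. The aim is to strip off a small pendant structure that loses at most three vertices while gaining at least one new minimal fort, injectively.

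\textbf{Key observation.} Take any two leaves $x,y$ of $T$ with a common neighbour $w$. The set $\{x,y\}$ is a fort: the only outside vertex with a neighbour inside is $w$, and it has exactly two such neighbours. Neither singleton is a fort, so $\{x,y\}$ is a minimal fort.

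\textbf{Case 1: some vertex $w$ has at least two leaf neighbours.} Let $x$ be one leaf at $w$ and put $T'=T-x$, which has $n-1$ vertices.
\begin{itemize}
\item \emph{Lifting.} Each minimal fort $F'$ of $T'$ lifts to a minimal fort of $T$. If $w\notin F'$, or if $w\in F'$ but $w$ has only non-leaf neighbours in $F'$, then $F'$ itself is still a fort of $T$, because $x$ sees at most $w$. Otherwise we replace $F'$ by $F'$, or by $F'\cup\{x\}$, and then minimalise.
\item \emph{Injectivity.} Distinct forts of $T'$ must give distinct minimal forts of $T$. I expect this bookkeeping to be the most delicate part.
\item \emph{Gain.} The pairs $\{x,y\}$ with $y$ another leaf at $w$ contain $x$, so they are new.
\end{itemize}
This gives at least $(n-1)/3+1\ge n/3$.

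\textbf{Case 2: every vertex has at most one leaf neighbour.} Root $T$ at an end of a longest path. A deepest leaf $x$ then has parent $w$ of degree $2$. Let $u$ be the parent of $w$.
\begin{itemize}
\item \emph{Subcase 2a: $u$ has another child $w'$ which is a support vertex of degree $2$.} Remove $x,w$ (or a similar gadget) and find a new fort, such as $\{x,u,x'\}$-type sets or alternating sets along the pendant paths.
\item \emph{Subcase 2b.} Remove the three vertices $x,w,u$, or the pendant path, to get $T'$ with $n-3$ vertices. Any minimal fort of $T'$ extends to a minimal fort of $T$ by adding vertices in a forced pattern along the path. One new fort containing $x$ then exists that does not arise from this extension; a candidate is $\{x\}\cup$ a minimal fort through $u$'s neighbourhood.
\end{itemize}

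\textbf{Main obstacle.} The hard step is the extension map in Case 2. Along a pendant path $x,w,u$, forts restrict in a parity pattern: $x\in F$ forces $u\in F$ via $w$, and so on. I would first try to show that the restriction $F\mapsto F\cap V(T')$ gives a surjection from $\mathrm{Fort}_{\min}(T)$ onto $\mathrm{Fort}_{\min}(T')$ with a nontrivial fibre. If that fails, I would fall back on the structural characterisation of minimal forts of trees as sets inducing specific patterns, namely leaves together with distance-$2$ alternating paths, and count them directly.

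The cleanest fallback would then be a direct injection from a set of at least $n/3$ vertices into minimal forts. For example, each support vertex $w$, and each vertex at distance $3$ layers, would be mapped to a minimal fort whose "top" vertex is prescribed, and distinctness would be verified by inspecting the topmost vertex.
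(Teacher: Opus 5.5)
The paper does not prove this statement; it quotes it from Cameron and Li. Its block-graph reductions (Lemmas~\ref{lem:pendant-star}--\ref{lem:terminal-hub}) show what a tree proof needs, and measured against them your sketch has two genuine gaps.

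\textbf{Case 1 fails as designed.} You claim that when $w\in F'$ and $w$ has only non-leaf neighbours in $F'$, the set $F'$ is still a fort of $T$. This is false, because $x$ then has exactly one neighbour, $w$, in $F'$. More seriously, $w$ keeps a leaf $y$ in $T-x$, so every fort of $T-x$ through $w$ contains $y$. Hence $F'\cup\{x\}\supseteq\{x,y\}$, and minimalising can land on $\{x,y\}$, which is exactly the fort you count as new. This really happens. Take the double star on adjacent centres $p,w$, with leaves $\ell_1,\ell_2$ at $p$ and $x,y$ at $w$ ($n=6$). Then
$\operatorname{Fort}_{\min}(T-x)=\{\{\ell_1,\ell_2\},\{w,y,\ell_1\},\{w,y,\ell_2\}\}$ but $\operatorname{Fort}_{\min}(T)=\{\{\ell_1,\ell_2\},\{x,y\}\}$. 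So $f_m(T)<f_m(T-x)$, and no injection of the kind you describe exists. Already $f_m(P_3)=f_m(P_2)=1$ refutes $f_m(T)\ge f_m(T-x)+1$. The fix is to delete $w$ together with all $p\ge 2$ of its leaves, as in Lemma~\ref{lem:pendant-star}. Each component of the remaining forest contains exactly one neighbour of $w$, so its minimal forts survive either unchanged or with one leaf of $w$ added. The $\binom p2$ leaf pairs are new, and $\binom p2\ge (p+1)/3$. Since the remainder is a forest, you also need Lemma~\ref{lem:fort-components}.

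\textbf{Case 2 omits the decisive step.} What must be produced is one minimal fort that is not a lift, and none of your candidates works.
\begin{itemize}
\item In Subcase 2a, $\{x,u,x'\}$ is not a fort once $u$ has a parent, since the parent sees only $u$. The set $\{x,w,x',w'\}$ is a minimal fort, and since it contains $w$ while no lift from $T-\{x,w\}$ does, deleting just $x,w$ already gains one.
\item The critical configuration is when $u$ has degree two, i.e.\ a pendant path $p-u-w-x$. There $\{x\}\cup F_0$ is never a fort, because $w$ sees only $x$. One needs $F_0\cup\{u,x\}$ with $F_0$ a minimal fort of $T-\{u,w,x\}$ \emph{containing $p$}, and you give no reason such an $F_0$ exists. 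In general it need not: the centre of $K_{1,3}$ lies in no minimal fort. This is why the paper's Lemma~\ref{lem:terminal-p3} falls back on Lemma~\ref{lem:rooted-alternative}, which uses Jacob's equivalence of fort- and zero-forcing-irrelevance to manufacture a near-fort $Q$.
\item Under your Case~2 hypothesis, existence can be shown directly. Root at $p$ and grow a fort $F\ni p$ with three properties: every vertex has at most one child in $F$; every vertex outside $F$ whose parent is in $F$ has exactly one child in $F$; all other outside vertices have none. This is possible because, rooted at $p$, no vertex of $T-\{u,w,x\}$ has two leaf children. A top-down argument then shows every fort inside $F$ contains $p$. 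Some such argument must be supplied.
\end{itemize}
The fallback plans at the end are only intentions, not proofs.
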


We show that in fact the same lower bound holds when considering the larger class of graphs of block graphs.

\begin{lemma}\label{lem:fort-components}
    Let $G_1,\ldots,G_k$ be the connected components of $G$. Then
    $$
        f_m(G)=\sum_{i=1}^{k}f_m(G_i).
    $$
\end{lemma}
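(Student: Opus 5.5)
The plan is to exhibit a bijection between $\operatorname{Fort}_{\min}(G)$ and the disjoint union of the families $\operatorname{Fort}_{\min}(G_i)$, $i=1,\dots,k$. Write $V_i=V(G_i)$; since the $V_i$ partition $V$ and there are no edges between distinct components, every vertex $u\in V_i$ satisfies $N(u)\subseteq V_i$. The argument rests on two facts about forts, proved in this order.

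The first fact is that a set $F\subseteq V_i$ is a fort of $G_i$ if and only if it is a fort of $G$. Take a vertex $u\notin F$. If $u\in V_i$, then $|N_G(u)\cap F|=|N_{G_i}(u)\cap F|$, so the fort condition at $u$ is the same in both graphs. If $u\notin V_i$, then $N_G(u)\cap F=\emptyset$, so $u$ has zero neighbors in $F$, and the fort condition for $G$ holds automatically at $u$.

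The second fact is that if $F$ is a fort of $G$ and $F_i=F\cap V_i\neq\emptyset$, then $F_i$ is a fort of $G_i$. Take $u\in V_i\setminus F_i$. Then $u\notin F$, and $N(u)\cap F=N(u)\cap F_i$ because $N(u)\subseteq V_i$. Since $F$ is a fort of $G$, this count is not exactly one, which is the fort condition for $F_i$ at $u$.

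With these two facts the bijection follows. Let $F$ be a minimal fort of $G$. Since $F$ is nonempty, some $F_i$ is nonempty, and by the second fact $F_i$ is a fort of $G_i$. By the first fact $F_i$ is then also a fort of $G$, and it is contained in $F$, so minimality forces $F=F_i\subseteq V_i$.

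Next, $F$ is minimal in $G_i$. A fort $F'\subsetneq F$ of $G_i$ would, by the first fact, be a fort of $G$, contradicting the minimality of $F$ in $G$.

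Conversely, let $F$ be a minimal fort of $G_i$. By the first fact it is a fort of $G$. Suppose $F'\subsetneq F$ were a fort of $G$. Then $F'\subseteq V_i$ is nonempty, so the first fact makes it a fort of $G_i$, contradicting minimality in $G_i$.

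Hence $\operatorname{Fort}_{\min}(G)=\bigsqcup_i \operatorname{Fort}_{\min}(G_i)$. The union is disjoint because the $V_i$ are disjoint and forts are nonempty, and counting both sides gives $f_m(G)=\sum_{i=1}^k f_m(G_i)$.

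No step is a real obstacle. The only point needing care is to derive that a minimal fort lies in a single component from the minimality of $F$, rather than assuming it in advance.
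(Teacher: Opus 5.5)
Your proposal is correct and follows essentially the same argument as the paper: restrict a fort to each component it meets, deduce that a minimal fort lies in a single component, and observe the converse. Your version is more explicit than the paper's, since you spell out the step the paper leaves implicit, namely that a fort of $G_i$ is also a fort of $G$; minimality needs this to force $F=F\cap V_i$.
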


\begin{proof}
    Let $F$ be a fort of $G$. For every $i$ such that $F\cap V(G_i)\neq\varnothing$, the set $F\cap V(G_i)$ is a fort of $G_i$, because there are no edges between different components. Consequently, if $F$ is a minimal fort of $G$, then it is contained in exactly one component and is a minimal fort of that component. Conversely, every minimal fort of a component is a minimal fort of $G$.
\end{proof}

\begin{lemma}\label{lem:rooted-alternative}
    Let $H$ be a connected graph and let $d\in V(H)$. At least one of the following statements holds.
    \begin{enumerate}
        \item The vertex $d$ belongs to a minimal fort of $H$.
        \item There is a nonempty set $Q\subseteq V(H)\setminus\{d\}$ such that
        \begin{enumerate}
            \item $|N_H(d)\cap Q|=1$;
            \item every vertex $v\in V(H)\setminus(Q\cup\{d\})$ satisfies $|N_H(v)\cap Q|\neq 1$;
            \item $Q$ contains no fort of $H$.
        \end{enumerate}
    \end{enumerate}
\end{lemma}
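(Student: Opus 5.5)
We argue by contraposition: assuming that $d$ lies in no minimal fort of $H$, we construct $Q$. The single-vertex graph is trivial, since there $\{d\}$ is a minimal fort. So assume $|V(H)|\ge 2$. Then $\{d\}$ is not a fort, because $d$ has a neighbour by connectivity. We use two standard facts throughout. First, a union of forts is a fort. Second, every fort contains a minimal fort.

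Let $R$ be the union of all forts of $H$ that avoid $d$. Under our assumption every minimal fort avoids $d$, and $V(H)$ contains at least one minimal fort, so $R$ is a nonempty fort with $d\notin R$. Every fort avoiding $d$ lies inside $R$. In particular, any fort contained in $V(H)\setminus(R\cup\{d\})$ is empty, which is impossible. So every subset of $V(H)\setminus(R\cup\{d\})$ satisfies (c).

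To obtain (a) and (b) we run a restricted color change process. Colour $V(H)\setminus(R\cup\{d\})$ white and everything else blue. Then apply the color change rule with one restriction: $d$ is never allowed to force. Let $Q$ be the set of vertices still white when the process stalls. Then $Q\subseteq V(H)\setminus(R\cup\{d\})$, so (c) holds. Stalling means exactly that every blue vertex other than $d$ has either zero or at least two white neighbours, which is (b). If $Q\neq\varnothing$ and $|N_H(d)\cap Q|\neq 1$, then no vertex outside $Q$ has exactly one neighbour in $Q$, so $Q$ would be a fort. This contradicts (c). Hence $|N_H(d)\cap Q|=1$, which is (a).

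The main obstacle is showing $Q\neq\varnothing$. Suppose instead that the restricted process turns all of $V(H)\setminus R$ except $d$ blue without $d$ ever forcing. Then $V(H)\setminus R$ is contained in the closure of $R\cup\{d\}$ under forces not made by $d$. I would derive a contradiction with the maximality of $R$ as follows. Take a fort $F$ containing $d$ that is minimal among forts containing $d$; such an $F$ exists because $V(H)$ is one. Since $F$ is not a minimal fort, $F\setminus\{d\}$ contains a fort, and that fort lies in $R$. The task is to use the recorded forcing order to exhibit a vertex outside $F$ with exactly one neighbour in $F$, contradicting that $F$ is a fort. The natural candidate is the forcer of the first vertex of $F\setminus(R\cup\{d\})$ to turn blue, which is not $d$. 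If this fails, I would modify the initial white set: for example, start from $F\setminus(M\cup\{d\})$, where $M$ is the union of all forts inside $F\setminus\{d\}$, so that the minimality of $F$ can be exploited directly.
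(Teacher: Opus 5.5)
Your derivations of (b), (c), and of (a) \emph{given} $Q\neq\varnothing$ are correct. However, the step you flag as the main obstacle is not merely unproven: it is false, so this approach cannot be completed.

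Take $H=P_3$ with vertices $x - d - z$. Its forts are $\{x,z\}$ and $V(H)$, so $d$ lies in no minimal fort, and $R=\{x,z\}=V(H)\setminus\{d\}$. Your initial white set $V(H)\setminus(R\cup\{d\})$ is therefore already empty, so $Q=\varnothing$ and there is no contradiction to be found. Your fallback gives the same empty set: the only fort containing $d$ is $F=V(H)$, and $M=\{x,z\}$. The same happens for every star $K_{1,p}$ with $p\geq 2$ and $d$ the center. In these examples every set $Q$ satisfying the lemma (e.g.\ $Q=\{x\}$) is a nonempty subset of $R$. So any construction that confines $Q$ to $V(H)\setminus(R\cup\{d\})$ fails, and property (c) cannot be obtained from the fact that forts avoiding $d$ lie in $R$.

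The missing idea is to obtain (c) from $V(H)\setminus Q$ being a zero forcing set, and to let $Q$ meet $R$ freely. The paper proceeds as follows.
\begin{enumerate}
\item By Jacob's equivalence (which is just the blocker duality between minimal forts and minimal zero forcing sets), $d$ lies in no minimal zero forcing set, hence in no minimum one.
\item Start from a minimum zero forcing set $Z$ and let $d$ force only when no other vertex can.
\item Then $d$ must eventually force. Otherwise $d$ is a terminal vertex, and reversing the forcing chains yields a minimum zero forcing set containing $d$.
\item Take $Q$ to be the white set immediately before $d$'s first force. Conditions (a) and (b) follow from the priority rule. Condition (c) holds because the blue set $V(H)\setminus Q$ is a zero forcing set and therefore meets every fort.
\end{enumerate}
In $P_3$ with $Z=\{x\}$, this construction produces $Q=\{z\}$.
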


\begin{proof}
    Suppose that $d$ belongs to no minimal fort of $H$. Thus $d$ is fort irrelevant. By a theorem of Jacob~\cite[Thm.~2.4]{jacob2025well}, fort irrelevance and zero-forcing irrelevance are equivalent. Hence $d$ belongs to no minimal zero forcing set and, in particular, to no minimum zero forcing set.

    Let $Z$ be a minimum zero forcing set of $H$. Starting from $Z$, perform forces so that a vertex different from $d$ is used whenever such a force is available; allow $d$ to force only when no other vertex can force. Since $Z$ is a zero forcing set, this process eventually colors every vertex.

    The vertex $d$ must perform a force. Otherwise, $d$ is a terminal vertex of its forcing chain. Reversing all forcing chains would then produce a minimum zero forcing set containing $d$, contrary to the choice of $d$.

    Let $Q$ be the set of uncolored vertices immediately before the first force performed by $d$. At this stage $d$ has exactly one neighbor in $Q$, while every other colored vertex has either zero or at least two neighbors in $Q$. The colored set $V(H)\setminus Q$ is a zero forcing set, since the remaining forcing sequence colors all vertices. Every zero forcing set intersects every fort, so $Q$ contains no fort of $H$.
\end{proof}

\begin{lemma}\label{lem:pendant-star}
    Let $u\in V(G)$ have $p\geq 2$ pendant neighbors $x_1,\ldots,x_p$. Let $H=G-\{u,x_1,\ldots,x_p\}$, and let $H_1,\ldots,H_r$ be the nonempty connected components of $H$. Then
    $$
        f_m(G)\geq \sum_{j=1}^{r}f_m(H_j)+\binom{p}{2}.
    $$
\end{lemma}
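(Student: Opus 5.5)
The plan is to exhibit explicitly $\binom{p}{2}$ minimal forts of $G$ contained in $\{x_1,\ldots,x_p\}$, together with an injective map from $\bigsqcup_j \operatorname{Fort}_{\min}(H_j)$ into $\operatorname{Fort}_{\min}(G)$ whose images all meet $V(H)$. The two families are then disjoint, and the bound follows by adding their sizes.

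\emph{The pendant pairs.} For $1\le i<j\le p$ I will check that $\{x_i,x_j\}$ is a minimal fort of $G$.
\begin{itemize}
\item The vertex $u$ has exactly two neighbours in it.
\item Every other pendant $x_k$ has its only neighbour $u$ outside the set, so it has zero neighbours inside.
\item Every vertex of $H$ is non-adjacent to all pendants, so it also has zero neighbours inside.
\end{itemize}
For minimality it suffices that a singleton $\{x_i\}$ is not a fort, since $u$ has exactly one neighbour in it. These pairs give $\binom p2$ distinct minimal forts.

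\emph{Lifting forts of the components.} Fix $j$ and a minimal fort $F$ of $H_j$. The key observation is that a vertex $w\in V(G)\setminus(F\cup\{u\})$ has the same number of neighbours in $F$ in $G$ as in $H_j$. Such a $w$ lies in $H_j$, in another component, or is a pendant, and the only extra adjacencies in $G$ involve $u$. Hence only $u$ can violate the fort condition, and I split on $|N_G(u)\cap F|$.

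\emph{Case $|N_G(u)\cap F|\neq1$.} Then $F$ itself is a fort of $G$. It is minimal: if $F'\subsetneq F$ were a fort of $G$, then by the observation $F'$ would be a fort of $H_j$, contradicting minimality of $F$. Set $\phi(F)=F$.

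\emph{Case $|N_G(u)\cap F|=1$.} Set $\phi(F)=F\cup\{x_1\}$. Now $u$ has exactly two neighbours in the set, and the other pendants see nothing. The vertices of $G-u$ outside the set see only $F$, because pendants are adjacent only to $u$. So $\phi(F)$ is a fort of $G$.

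For minimality, let $F''\subseteq F\cup\{x_1\}$ be a fort of $G$.
\begin{itemize}
\item If $F''=\{x_1\}$, then $u$ has exactly one neighbour in it, so it is not a fort.
\item Otherwise $S=F''\cap V(H_j)$ is nonempty, and each vertex of $H_j\setminus S$ sees in $F''$ only vertices of $S$. So $S$ is a fort of $H_j$, and $S=F$ by minimality.
\item If $x_1\notin F''$, then $F''=F$, which fails at $u$. Hence $F''=F\cup\{x_1\}$.
\end{itemize}

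\emph{Conclusion.} $\phi(F)\cap V(H)=F$ determines both $j$ and $F$, so $\phi$ is injective across all components. Every image meets $V(H)$, so no image is a pendant pair. Summing the two disjoint families gives the inequality.

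The only delicate step is the minimality check in the second case. It relies on the fact that pendants are adjacent only to $u$, which keeps both the component part and the added pendant forced. Everything else is a direct verification of the fort condition.
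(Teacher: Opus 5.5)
Your proposal is correct and follows essentially the same route as the paper. It uses the pendant pairs $\{x_i,x_j\}$ together with the lift $F\mapsto F$ or $F\mapsto F\cup\{x_1\}$, according to whether $|N_G(u)\cap F|\neq 1$, and checks minimality by restricting a subfort to $V(H_j)$. You also spell out two points the paper only asserts: minimality in the case $|N_G(u)\cap F|\neq 1$, and injectivity and disjointness of the two families via $\phi(F)\cap V(H)=F$.
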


\begin{proof}
    Let $F$ be a minimal fort of some component $H_j$, and put $k=|N_G(u)\cap F|$. If $k\neq 1$, then $F$ is a minimal fort of $G$. If $k=1$, then $F\cup\{x_1\}$ is a minimal fort of $G$. Indeed, $u$ has two neighbors in $F\cup\{x_1\}$, and every other vertex outside this set satisfies the fort condition.

    To verify minimality in the second case, let $K\subseteq F\cup\{x_1\}$ be a fort of $G$. If $x_1\in K$, then $K\cap V(H_j)\neq\varnothing$, since otherwise $u$ has exactly one neighbor in $K$. Moreover, $K\cap V(H_j)$ is a fort of $H_j$, so the minimality of $F$ implies $K\cap V(H_j)=F$. If $x_1\notin K$, then $K\subseteq F$ and $K$ is a fort of $H_j$; hence $K=F$, but $F$ is not a fort of $G$ when $k=1$. Thus $F\cup\{x_1\}$ is minimal.

    These constructions are distinct for different minimal forts and different components. In addition, every pair $\{x_i,x_j\}$, $i\neq j$, is a minimal fort of $G$. Therefore
    $$
        f_m(G)\geq \sum_{j=1}^{r}f_m(H_j)+\binom{p}{2},
    $$
    which finishes the proof.
\end{proof}

\begin{lemma}\label{lem:terminal-p3}
    Suppose that $G$ contains an induced path $d-a-b-c$ such that $N_G(a)=\{d,b\}$, $N_G(b)=\{a,c\}$, and $N_G(c)=\{b\}$. Let $H=G-\{a,b,c\}$, and assume that $H$ is connected. Then
    $$
        f_m(G)\geq f_m(H)+1.
    $$
\end{lemma}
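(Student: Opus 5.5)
We inject each minimal fort of $H$ into $\operatorname{Fort}_{\min}(G)$ and then exhibit one further minimal fort of $G$ that is not in the image. Since $H$ attaches to the pendant path only through $d$, the only vertex of $G$ outside $V(H)$ with a neighbor in $V(H)$ is $a$, whose unique neighbor there is $d$. Let $F$ be a minimal fort of $H$. If $d\notin F$, then $F$ is a fort of $G$: $a$ has no neighbor in $F$, $b,c$ have none either, and vertices of $H$ are unaffected. Minimality in $G$ holds because any fort $K\subseteq F$ of $G$ lies in $V(H)$ and is a fort of $H$, since $H$ is an induced subgraph and vertices of $H$ see the same neighbors in $K$. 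If $d\in F$, then $a$ has exactly one neighbor in $F$, so we repair this. The choice is $F\cup\{b,c\}$: now $a$ has two neighbors, $d$ and $b$, in it, and no other outside vertex is affected. For minimality, let $K\subseteq F\cup\{b,c\}$ be a fort of $G$. Then $K\cap V(H)$, if nonempty, is a fort of $H$, because vertices of $H$ outside $K$ have neighbors only in $H$ or at $a\notin K$; hence it equals $F$. We also need $b,c\in K$. If $c\in K$ but $b\notin K$, then $b$ sees exactly one vertex, $c$. If $b\in K$ but $c\notin K$, then $c$ sees exactly $b$. If $b,c\notin K$, then $K=F$ and $a$ sees only $d$. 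Finally, $K\cap V(H)=\varnothing$ is impossible: then $K\subseteq\{b,c\}$, and $\{b,c\}$ forces $a$ to see only $b$, while $\{c\}$ and $\{b\}$ fail via $b$ and via $a$ (together with $c$). So $K=F\cup\{b,c\}$, and the map $F\mapsto F$ or $F\cup\{b,c\}$ is injective since it restricts back to $F$ on $V(H)$.

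It remains to find one extra minimal fort of $G$ whose intersection with $V(H)$ is not a minimal fort of $H$. We apply Lemma~\ref{lem:rooted-alternative} to $H$ and $d$. The candidate fort $\{a,c\}$ satisfies: $b$ sees both $a$ and $c$, and $d$ sees exactly $a$, so it fails. The fix uses the alternative's set $Q$. In case 2, with $Q\subseteq V(H)\setminus\{d\}$ and $|N(d)\cap Q|=1$, take $F^*=Q\cup\{a,c\}$. Then $d$ sees exactly its one $Q$-neighbor plus $a$, which is two; $b$ sees $a$ and $c$; and every other vertex of $H$ sees $0$ or at least $2$ vertices of $Q$ by (b). So $F^*$ is a fort of $G$. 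It contains a minimal fort $K$ of $G$, and $K\cap V(H)\subseteq Q$ contains no fort of $H$ by (c). Since every image fort meets $V(H)$ in a fort of $H$, $K$ is new, provided $K\cap V(H)\neq\varnothing$. This last condition is guaranteed because the only forts inside $\{a,b,c\}$ would need $d$ to see none of them, forcing $a\notin K$; then $b$ or $c$ fails as shown above.

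In case 1, $d$ lies in a minimal fort $F_0$ of $H$. Here we need a different extra fort, namely one whose $H$-part equals $F_0$ but which is not $F_0\cup\{b,c\}$. The candidate is $F_0\cup\{a,c\}$. The vertex $b$ sees $a$ and $c$, and vertices of $H$ see the same as before except neighbors of $a$, of which there is only $d\in F_0$, so it is a fort. We will show it is minimal by the same restriction argument: the $H$-part must be $F_0$, $a$ is needed or else $b$ sees only $c$, and $c$ is needed or else $b$ sees only $a$. It differs from $F_0\cup\{b,c\}$, giving the extra fort. The main obstacle is careful bookkeeping of the minimality checks, especially the claim that any sub-fort meets $V(H)$ in a fort of $H$, which uses the fact that $a\notin V(H)$ is the only external neighbor.
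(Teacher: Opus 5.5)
Your injection of $\operatorname{Fort}_{\min}(H)$ into $\operatorname{Fort}_{\min}(G)$ is correct and matches the paper. So is your extra fort $Q\cup\{a,c\}$ for the case where $d$ lies in no minimal fort of $H$, including the newness argument via condition (c).

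\textbf{The gap is in the other case.} Your claim that $F_0\cup\{a,c\}$ is itself a minimal fort of $G$ is false. The restriction argument you invoke (``the $H$-part must be $F_0$'') relies on $a\notin K$. Here $a\in K$ is allowed, and if moreover $d\notin K$, then $d$ counts $a$ among its neighbors in $K$. So $K\cap V(H)$ only has to give $d$ at least one neighbor, and it need not be a fort of $H$.

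\textbf{Counterexample.} Let $G$ be the path $c-b-a-d-w-x-y$, so $H$ is the path $d-w-x-y$. The set $F_0=\{d,w,y\}$ is a minimal fort of $H$ containing $d$. Yet $\{a,c,w,y\}$ is a fort of $G$, since $b$, $d$, $x$ each have exactly two neighbors in it, and it lies properly inside $F_0\cup\{a,c\}$. Its $H$-part $\{w,y\}$ is not a fort of $H$. (With the other choice $F_0=\{d,x,y\}$ the set would happen to be minimal, but your argument must work for whichever $F_0$ you are given.)

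\textbf{Repair.} Use the same move you used in the other case, which is exactly how the paper argues: do not claim minimality, but take a minimal fort $K\subseteq F_0\cup\{a,c\}$ of $G$.
\begin{enumerate}
    \item $K\not\subseteq V(H)$. A fort of $G$ lying in $V(H)$ is a fort of $H$, so it would equal $F_0$ by minimality. But $F_0$ is not a fort of $G$, because $a$ has the unique neighbor $d$ in it.
    \item Hence $K$ meets $\{a,c\}$. Since $b\notin K$, the vertex $b$ forces both $a$ and $c$ into $K$.
    \item So $K$ contains $a$ but not $b$. It is therefore neither a minimal fort of $H$ nor of the form $F\cup\{b,c\}$, and it supplies the required additional minimal fort.
\end{enumerate}
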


\begin{proof}
    Partition the minimal forts of $H$ into
    $$
        \mathcal{A} = \{F\in\operatorname{Fort}_{\min}(H):d\notin F\},
        \qquad
        \mathcal{B} = \{F\in\operatorname{Fort}_{\min}(H):d\in F\}.
    $$
    Every $F\in\mathcal{A}$ remains a minimal fort of $G$. For every $F\in\mathcal{B}$, the set $\widehat F=F\cup\{b,c\}$ is a minimal fort of $G$. It is a fort because $a$ has the two neighbors $d$ and $b$ in $\widehat F$.

    To prove minimality, let $K\subseteq\widehat F$ be a fort of $G$. The set $K$ contains either both $b,c$ or neither: if exactly one of them is selected, then either $b$ or $c$ has exactly one selected neighbor. If $b,c\in K$, then $d\in K$, since otherwise $a$ has the unique selected neighbor $b$. It follows that $K\cap V(H)$ is a fort of $H$, and hence $K\cap V(H)=F$. If $b,c\notin K$, then $d\notin K$, since otherwise $a$ has exactly one selected neighbor. In that case $K\subseteq F\setminus\{d\}$ would be a fort of $H$, which is impossible. Thus $K=\widehat F$.

    The constructions above give $f_m(H)$ distinct minimal forts of $G$. We construct one more. If $\mathcal{B}\neq\varnothing$, choose $F_0\in\mathcal{B}$ and set $X=F_0\cup\{a,c\}$.

    Then $X$ is a fort of $G$, because $b$ has the two neighbors $a,c$ in $X$. The set $X$ contains no fort lying entirely in $H$: no proper subset of $F_0$ is a fort of $H$, while $F_0$ itself is not a fort of $G$, since $a$ has the unique neighbor $d$ in $F_0$. It also contains no fort lying entirely in $\{a,c\}$. Therefore, every minimal fort contained in $X$ intersects both $V(H)$ and $\{a,c\}$. Since $b\notin X$, such a minimal fort must contain both $a$ and $c$, and hence it is different from all previously constructed minimal forts.

    If $\mathcal{B}=\varnothing$, let $Q$ be supplied by Lemma~\ref{lem:rooted-alternative} and set $X=Q\cup\{a,c\}$. Now $d$ has its unique neighbor in $Q$ together with the neighbor $a$, and $b$ has the two neighbors $a,c$. Thus $X$ is a fort. Since $Q$ contains no fort of $H$, the same argument gives a new minimal fort contained in $X$. Hence $f_m(G)\geq f_m(H)+1$.
\end{proof}

\begin{lemma}\label{lem:terminal-hub}
    Let $H$ be a connected graph with a specified vertex $d$. Form $G$ by adding a vertex $u$, the edge $du$, and $t\geq2$ pairwise internally disjoint pendant paths
    $$
        P_i=u-y_{i,1}-\cdots-y_{i,\ell_i}, \qquad \ell_i\in\{1,2\},
    $$
    such that the added vertices have no other neighbors. Assume that at
    most one of the paths has length one. Let $W=1+\sum_{i=1}^{t}\ell_i$
    be the number of added vertices. Then
    $$
        f_m(G)\geq f_m(H)+\frac{W}{3}.
    $$
\end{lemma}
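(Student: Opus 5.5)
The plan is to mimic the proofs of Lemmas~\ref{lem:pendant-star} and~\ref{lem:terminal-p3}. I will lift every minimal fort of $H$ injectively to a minimal fort of $G$ that avoids $u$, and then exhibit enough further minimal forts of $G$ to cover the deficit $W/3$. Write $t_1\le 1$ for the number of paths of length one and $t_2$ for the number of length two, so that $W=1+t_1+2t_2$. Since $t\ge 2$ and $t_1\le 1$, we have $t_2\ge 1$. For a path of length two write $P_i=u-a_i-b_i$, and if $t_1=1$ let $y$ be the leaf on the short path.

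\emph{Lifting.} Let $F\in\operatorname{Fort}_{\min}(H)$. If $d\notin F$, then $u$ has no neighbour in $F$, so $F$ is a minimal fort of $G$. If $d\in F$, then $u$ has exactly one neighbour in $F$, and I take $\widehat F=F\cup\{a_1,b_1\}$. This is a fort, since $u$ now sees $d$ and $a_1$, and the other added vertices see nothing. For minimality, let $K\subseteq\widehat F$ be a fort of $G$. Then $a_1\in K$ if and only if $b_1\in K$, because $b_1$ is a leaf and $a_1$ would otherwise see only $b_1$. Next, $K$ must meet $V(H)$, because $u$ cannot see $a_1$ alone. Finally, $K\cap V(H)$ is a fort of $H$, hence equals $F$, and $F$ alone fails at $u$. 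This is exactly the argument of Lemma~\ref{lem:pendant-star}. The lifted forts are pairwise distinct, none contains $u$, and each meets $V(H)$.

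\emph{Extra forts.} First, for $i\neq j$ the set $\{a_i,b_i,a_j,b_j\}$ is a fort, since $u$ sees $a_i$ and $a_j$. It is minimal: by the leaf argument a fort inside it must take whole pairs, and a single pair fails at $u$. This gives $\binom{t_2}{2}$ forts. If $t_1=1$, the sets $\{y,a_i,b_i\}$ give $t_2$ more minimal forts by the same reasoning. These forts avoid $V(H)\cup\{u\}$, so they are new. Second, I find a minimal fort containing $u$ by appealing to Lemma~\ref{lem:rooted-alternative} with $H$ and $d$.
\begin{itemize}
\item If $d$ lies in some $F_0\in\operatorname{Fort}_{\min}(H)$, set $X=F_0\cup\{u\}\cup\{b_i\}_i\cup\{y\}$, where $y$ is included only when $t_1=1$.
\item Otherwise set $X=Q\cup\{u\}\cup\{b_i\}_i\cup\{y\}$, with $Q$ from the lemma.
\end{itemize}
In both cases $X$ is a fort. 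Each $a_i$ sees $u$ and $b_i$. In the second case $d$ sees $u$ together with its unique neighbour in $Q$. Vertices of $H$ see $X$ exactly as they see $F_0$ or $Q$, since $u$ is adjacent only to $d$ within $H$. A fort inside $X$ that avoids $u$ is impossible. It would have to contain no $b_i$, since otherwise $a_i$ sees only $b_i$, and no $y$, since $y$ is isolated there. It would then be a fort of $H$ inside $F_0$, which must equal $F_0$ and fails at $u$, or a fort inside $Q$, which does not exist. Hence some minimal fort inside $X$ contains $u$, and it differs from all forts constructed above.

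\emph{Counting.} Altogether $f_m(G)\ge f_m(H)+\binom{t_2}{2}+t_1t_2+1$, and it remains to verify
\[
\binom{t_2}{2}+t_1t_2+1\ \ge\ \frac{1+t_1+2t_2}{3}.
\]
For $t_1=0$ and $t_2\ge2$ the left side is at least $1+\binom{t_2}{2}$, which dominates $(1+2t_2)/3$. For $t_1=1$ and $t_2\ge1$ the left side is at least $t_2+1\ge (2+2t_2)/3$. The main obstacle is the single extra fort through $u$. Without it, small cases such as $t_1=0,\ t_2=2$ or $t_1=t_2=1$ fall short of $W/3$. The delicate point is the case where $d$ lies in no minimal fort, and this is exactly what Lemma~\ref{lem:rooted-alternative} is designed to handle.
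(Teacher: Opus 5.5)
Your architecture is the paper's. You lift the minimal forts of $H$ through a length-two arm, so the lifts contain $a_1$. You add the $\binom{t}{2}$ arm-pair forts. You obtain one more minimal fort from $u$ plus the terminal vertices via Lemma~\ref{lem:rooted-alternative}. The lifting, the arm-pair forts, the check that $X$ is a fort, and the arithmetic are all correct.

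\textbf{The gap.} The step that fails is ``a fort inside $X$ that avoids $u$ is impossible'', specifically the claim that $y$ cannot be kept. When $t_1=1$ and $d\in F_0$, the set $F_0\cup\{y\}\subseteq X$ is a fort of $G$. Indeed, $u$ sees both $d$ and $y$, every $a_i$ and $b_i$ sees nothing, and the vertices of $H$ see $F_0$. So the conclusion that some minimal fort inside $X$ contains $u$ can be false.

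For instance, let $H$ be the path $h-c-d$, so $F_0=\{d,h\}$, with arms $u-y$ and $u-a_1-b_1$. Let $K\subseteq X=\{d,h,u,b_1,y\}$ be a fort with $u\in K$. It must contain $b_1$ and $y$, since otherwise $a_1$ or $y$ sees only $u$. It must then contain $d$, since otherwise $d$ sees only $u$. It must then contain $h$, since otherwise $c$ sees only $d$. Hence $K=X$, which is not minimal because it contains the fort $\{d,h,y\}$. Your distinctness argument rests on the new fort containing $u$, so it does not apply in this case.

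\textbf{The repair.} Use the paper's distinctness argument, which never asks for $u$. Show only that a minimal fort $K\subseteq X$ meets both $V(H)$ and the added vertices.
\begin{itemize}
\item $K$ cannot lie among the added vertices. If $u\in K$, then $d\notin K$ sees only $u$. If $u\notin K$, no $b_i$ can be kept, and $y$ alone fails at $u$.
\item $K$ cannot lie in $V(H)$. In the first case it would be a fort of $H$ inside $F_0$, hence equal to $F_0$, which fails at $u$. In the second case it would be a fort of $H$ inside $Q$, which does not exist.
\end{itemize}
Since $X$ contains no middle vertex $a_i$, such a $K$ differs from every lifted fort $F\cup\{a_1,b_1\}$. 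It also differs from the unlifted forts lying in $H$ and from the arm-pair forts lying outside $H$. In the example above, $K=\{d,h,y\}$.

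With this change your count $f_m(G)\ge f_m(H)+\binom{t}{2}+1$ stands. The paper uses the extra fort only for $t=2$, relying on $\binom{t}{2}\ge W/3$ when $t\ge3$, but adding it for every $t$ is harmless.
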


\begin{proof}
    For each $i$, write $A_i=V(P_i)\setminus\{u\}$. Every pair $A_i\cup A_j$, $i\neq j$, is a minimal fort of $G$. Indeed, $u$ has exactly two neighbors in this set, and on a length-two arm either both arm vertices must be selected or neither. Thus these sets give $\binom{t}{2}$ distinct minimal forts.

    Every minimal fort $F$ of $H$ avoiding $d$ remains a minimal fort of $G$. If $d\in F$, then $F\cup A_1$ is a minimal fort of $G$. The vertex $u$ has the two neighbors $d$ and $y_{1,1}$. Moreover, any fort contained in $F\cup A_1$ that meets $A_1$ must contain all of $A_1$, must contain $d$, and has a fort of $H$ as its intersection with $V(H)$. The minimality of $F$ therefore proves the claim.

    Consequently,
    $$
        f_m(G)\geq f_m(H)+\binom{t}{2}.
    $$
    If $t\geq3$, then $\binom{t}{2}\geq\frac{1+2t}{3}\geq\frac{W}{3}$, and the result follows.

    It remains to consider $t=2$. Since at most one arm has length one, at least one arm has length two; relabel so that $P_1$ has length two. Let $e_i=y_{i,\ell_i}$ be the terminal vertex of $P_i$, and put $Y=\{u,e_1,e_2\}$,

    If $d$ belongs to a minimal fort $F_0$ of $H$, then $F_0\cup Y$ is a fort of $G$. If $d$ belongs to no minimal fort, let $Q$ be supplied by Lemma~\ref{lem:rooted-alternative}; then $Q\cup Y$ is a fort of $G$. In the latter case, $d$ has its unique neighbor in $Q$ together with $u$. In either case, the middle vertex of every length-two arm has the two neighbors $u$ and $e_i$.

    The set $Y$ contains no fort supported entirely on the added vertices: if $u$ is selected, then $d$ has exactly one selected neighbor, while if $u$ is not selected, a selected terminal vertex on a length-two arm leaves its middle vertex with exactly one selected neighbor. The possible length-one arm cannot by itself form a fort. The chosen subset of $H$ also contains no fort that is a fort of $G$. Hence the displayed fort contains a minimal fort that meets both $H$ and the added vertices. This new minimal fort is different from the arm-pair fort, from every minimal fort lying in $H$, and from every lifted fort $F\cup A_1$, because the latter contains the middle vertex of the length-two arm $P_1$, whereas $Y$ does not.

    Thus, when $t=2$,
    $$
        f_m(G) \geq f_m(H)+2 \geq f_m(H)+\frac{W}{3},
    $$
    because $W\leq 5$.
\end{proof}

\begin{lemma}\label{lem:terminal-clique}
    Let $H$ be a connected graph with a specified vertex $d$. Form $G$ by adding vertices $z_1,\ldots,z_s$, where $s\geq2$, and making $B=\{d,z_1,\ldots,z_s\}$ a clique. At each $z_i$, attach $t_i\geq0$ pairwise internally disjoint pendant paths
    $$
        P_{i,j} = z_i-y_{i,j,1}-\cdots-y_{i,j,\ell_{i,j}}, \qquad \ell_{i,j}\in\{1,2\},
    $$
    and assume that the added vertices have no further neighbors. Assume also that, for each $i$, at most one arm at $z_i$ has length one. Let $W$ be the number of added vertices. Then
    $$
        f_m(G) \geq f_m(H)+\frac{W}{3}.
    $$
\end{lemma}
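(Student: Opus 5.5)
The plan is to follow the proof of Lemma~\ref{lem:terminal-hub}. First we lift every minimal fort of $H$ injectively to a minimal fort of $G$. Then we exhibit enough additional minimal forts, each meeting the added vertices in a pattern that no lifted fort has, to total at least $W/3$. Here $W=s+\sum_{i}\sum_{j}\ell_{i,j}$. For each $i$ write $A_{i,j}=V(P_{i,j})\setminus\{z_i\}$ and $X_i=\{z_i\}\cup\bigcup_j A_{i,j}$.

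\emph{Lifting.} A minimal fort $F$ of $H$ with $d\notin F$ stays a minimal fort of $G$, because every added vertex has no neighbor in $F$. If $d\in F$, consider $F\cup X_1$. Each $z_k$ with $k\ne1$ has the two neighbors $d,z_1$ in it, and every arm vertex is inside it, so it is a fort. For minimality, argue as in Lemma~\ref{lem:terminal-p3}: a fort $K\subseteq F\cup X_1$ has $K\cap V(H)$ a fort of $H$ or empty. If $K\cap V(H)=\varnothing$, then $K\subseteq X_1$, which is impossible since $d$ would have the unique neighbor $z_1$. If $K\cap V(H)\ne\varnothing$, minimality of $F$ forces $K\cap V(H)=F$. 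On the $X_1$ side, $z_1\in K$ is forced, since otherwise each $z_k$ with $k\ne 1$ would have the unique neighbor $d$ in $K$. A length-two arm at $z_1$ is either fully in $K$ or fully out. If $K$ does not exactly equal $F\cup X_1$, I would pass to the unique minimal fort inside $F\cup X_1$ that meets $V(H)$ in $F$. Injectivity is what matters, and it holds because the trace on $V(H)$ is $F$. This gives $f_m(H)$ distinct minimal forts, all meeting $X_1$ and no $X_k$ with $k\ne1$.

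\emph{Extra forts.} Three sources supply them.
\begin{enumerate}
\item For each $i$ and $j\ne j'$, the set $A_{i,j}\cup A_{i,j'}$ is a minimal fort, exactly as in Lemma~\ref{lem:terminal-hub}. This gives $\sum_i\binom{t_i}{2}$ forts.
\item For $i\ne k$, the set $X_i\cup X_k$ is a fort: $d$ and every other $z_m$ see $z_i,z_k$, and nothing else outside it has a neighbor inside. It contains no fort of $H$. Any minimal fort inside it avoids $V(H)$ and must contain $z_i$ and $z_k$: it must contain some $z$, otherwise an arm-only set leaves a middle vertex or $z$ with a unique neighbor, and it cannot contain only one $z$, because then $d$ sees exactly one. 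So it is distinct from all lifted forts and from the arm pairs, and distinct pairs $\{i,k\}$ give distinct forts. This contributes $\binom{s}{2}$.
\item As in the $t=2$ case of Lemma~\ref{lem:terminal-hub}, take $F_0\ni d$ a minimal fort of $H$ or, failing that, $Q$ from Lemma~\ref{lem:rooted-alternative}. Adjoin $z_1$, $z_2$ and only the \emph{terminal} vertices of their length-two arms, omitting the length-one arm if present. This should give one more fort whose minimal subforts meet $V(H)$ and omit a middle arm vertex, hence are new.
\end{enumerate}

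\emph{Counting.} Since each $z_i$ has at most one length-one arm, $\sum_j\ell_{i,j}\le 2t_i$, so $W\le s+2\sum_i t_i$. It then suffices to prove
$$\binom{s}{2}+\sum_i\binom{t_i}{2}+1\ \ge\ \frac{s+2\sum_i t_i}{3}.$$
Large $t_i$ are absorbed by $\binom{t_i}{2}\ge 2t_i/3$ once $t_i\ge3$, and large $s$ by $\binom{s}{2}$. The main obstacle is the small cases, for example $s=2$ with $t_1,t_2\in\{1,2\}$, where $W$ can reach $10$ and the bound needs about $4$ forts. There I would add further extra forts of the type in item~3: vary which $z_i$ is used and which arms' terminal vertices are included, and make $\{z_i\}\cup\{\text{terminals at } z_i\}\cup F_0$-type sets for each $i$ separately. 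Each such set yields a minimal fort distinguished by which middle vertices it omits. Checking that these configurations genuinely give distinct minimal forts is the delicate part.
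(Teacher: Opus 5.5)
There is a genuine gap in item~3, and that item supplies the one extra fort you cannot do without when $s=2$.

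\textbf{Why item~3 fails.} Let $C_i$ consist of $z_i$ together with the terminal vertex of every arm at $z_i$.
\begin{itemize}
\item Your lift $F\cup X_1$ is not minimal when $z_1$ carries a length-two arm $z_1-m-e$. Dropping $m$ leaves it seeing $z_1$ and $e$, so your claim that such an arm is ``fully in or fully out'' is false. The minimal fort your hedge actually produces is $F\cup C_1$, which omits every middle vertex. So ``omits a middle arm vertex'' does not distinguish item~3 from the lifts.
\item Your item~3 set contains $C_1\cup C_2$, and with $F_0$ it also contains $F_0\cup C_1$. Both are already counted, so nothing forces a new minimal subfort.
\item Omitting a length-one arm $z_i-y$ destroys the fort property, since $y$ then sees only $z_i$.
\end{itemize}

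\textbf{A concrete failure.} Take $H=K_{1,2}$ with center $d$ and leaves $\ell_1,\ell_2$. Then $d$ lies in no minimal fort, so you are in the $Q$ case with $Q=\{\ell_1\}$. Let $s=2$, with one arm $z_i-m_i-e_i$ at each $z_i$, and consider $X=\{\ell_1,z_1,z_2,e_1,e_2\}$.
\begin{itemize}
\item A subfort of $X$ contains $z_i$ iff it contains $e_i$ (look at $m_i$).
\item It contains $z_1$ iff it contains $z_2$ (the omitted one would see a single vertex).
\item It cannot be $\{\ell_1\}$, because $d$ would see one vertex.
\end{itemize}
So the only minimal subfort of $X$ is $C_1\cup C_2$, which is your item~2 fort. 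Your argument then certifies only $f_m(G)\ge f_m(H)+1=2$, while the lemma asserts $f_m(G)\ge f_m(H)+W/3=3$.

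\textbf{The missing idea.} Leave $z_1$ \emph{out}, so the new fort cannot be any lift $F\cup C_1$, and give $z_1$ its second neighbor through one of its own arms. The paper takes $Y=C_2\cup A_{1,1}$ (available once $T=\sum_i t_i\ge1$) together with $F_0$ or $Q$.
\begin{itemize}
\item Now $z_1$ sees $z_2$ and $y_{1,1,1}$.
\item $Y$ contains no fort. A subfort containing $z_2$ leaves $d$ with the unique neighbor $z_2$. Without $z_2$, neither the terminals of $C_2$ nor the single arm $A_{1,1}$ survive.
\item The $H$-part contains no fort of $G$.
\end{itemize}
Hence every minimal subfort meets both sides and misses $z_1$, so it is new.

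\textbf{The counting then closes.} With this one fort, the further constructions you contemplate are unnecessary. Put $L=\binom{s}{2}+\sum_i\binom{t_i}{2}$.
\begin{itemize}
\item For $s=2$ and $T\ge1$, the inequality $3\binom{t}{2}\ge 2t-2$ gives $3(L+1)\ge 2+2T\ge W$. Your case $t_1=t_2=2$ already has $4\ge 10/3$.
\item For $s=2$ and $T=0$, $L=1\ge 2/3$.
\item For $s\ge3$, $3\binom{s}{2}\ge 3s$, so no extra fort is needed.
\end{itemize}

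\textbf{Two smaller repairs.}
\begin{itemize}
\item Arm pairs at $z_1$ lie inside $F\cup X_1$, so ``$K\cap V(H)=\varnothing$ is impossible'' is false. Only the passage to $F\cup C_1$ saves the lift.
\item In item~2, arm pairs lie inside $X_i\cup X_k$ when $t_i\ge2$. You must name the minimal fort $C_i\cup C_k$ explicitly; an arbitrary minimal subfort may double count with item~1.
\end{itemize}
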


\begin{proof}
    For every arm $P_{i,j}$, let $A_{i,j}=V(P_{i,j})\setminus\{z_i\}$, and define the core package $C_i = \{z_i\} \cup \{y_{i,j,\ell_{i,j}}:1\leq j\leq t_i\}$. Thus $C_i$ contains $z_i$ and the terminal vertex of each arm at $z_i$, but not the middle vertex of a length-two arm.

    We first construct $f_m(H)$ minimal forts of $G$. Every minimal fort of $H$ avoiding $d$ remains a minimal fort of $G$. If $F\in\operatorname{Fort}_{\min}(H)$ contains $d$, then $F\cup C_1$ is a minimal fort of $G$. Indeed, every vertex $z_j\in B\setminus\{d,z_1\}$ has the two selected neighbors $d,z_1$, and the middle vertex of a selected length-two arm at $z_1$ has the two selected neighbors $z_1$ and its terminal vertex.

    For minimality, let $K\subseteq F\cup C_1$ be a fort. If $z_1\in K$, then $d\in K$, since any $z_j\in B\setminus\{d,z_1\}$ would otherwise have the unique selected neighbor $z_1$. All terminal vertices in $C_1$ are then forced into $K$: omitting a pendant terminal vertex leaves that vertex with the unique selected neighbor $z_1$, while omitting the terminal vertex of a length-two arm leaves its middle vertex with the unique selected neighbor $z_1$. It follows that $K\cap V(H)$ is a fort of $H$, so $K=F\cup C_1$. If $z_1\notin K$, then $d\notin K$, and no terminal vertex in $C_1$ can belong to $K$. Thus $K\subseteq F\setminus\{d\}$ would be a fort of $H$, a contradiction.

    Next, for every $i\neq j$, the set $C_i\cup C_j$ is a minimal fort of $G$. Any vertex of the clique outside this set has the two selected neighbors $z_i,z_j$, and the middle vertex of a selected length-two arm has its core vertex and terminal vertex as selected neighbors. Since $|B|\geq3$, every fort contained in $C_i\cup C_j$ must contain either both $z_i,z_j$ or neither. The latter is impossible for a nonempty fort, and once $z_i,z_j$ are selected, every terminal vertex in the two packages is forced. Hence there are $\binom{s}{2}$ such minimal forts.

    Finally, for every pair of distinct arms $P_{i,j},P_{i,k}$ at the same core vertex $z_i$, the set $A_{i,j}\cup A_{i,k}$ is a minimal fort. This gives $\sum_{i=1}^{s}\binom{t_i}{2}$ additional minimal forts. All the constructions above are distinct. Therefore,
    $$
        f_m(G) \geq f_m(H) + L,
        \qquad
        L = \binom{s}{2} + \sum_{i=1}^{s}\binom{t_i}{2}.
    $$

    Put $T=\sum_{i=1}^{s}t_i$. Since every arm has length at most two, $W\leq s+2T$. For every nonnegative integer $t$, we have $3\binom{t}{2}\geq 2t-2$. If $s\geq3$, then
    $$
        3L \geq 3\binom{s}{2}+2T-2s \geq s+2T \geq W,
    $$
    where the second inequality follows from $3\binom{s}{2}\geq3s$.

    It remains to consider $s=2$. If $T=0$, then $W=2$ and $L=1$, so $L\geq W/3$. Suppose that $T\geq1$, and relabel so that $t_1\geq1$. Choose one arm $P_{1,1}$ at $z_1$ and put $Y=C_2\cup A_{1,1}$.

    If $d$ belongs to a minimal fort $F_0$ of $H$, then $F_0\cup Y$ is a fort of $G$. If $d$ belongs to no minimal fort of $H$, let $Q$ be supplied by Lemma~\ref{lem:rooted-alternative}; then $Q\cup Y$ is a fort of $G$. In the second case, $d$ has its unique selected neighbor in $Q$ together with $z_2$, while $z_1$ has the two selected neighbors $z_2$ and $y_{1,1,1}$.

    The set $Y$ contains no fort supported entirely on the added vertices. Indeed, because $d$ is adjacent to $z_2$ but not $z_1$, such a fort cannot contain $z_2$. Once $z_2$ is omitted, no terminal vertex from $C_2$ can be selected, and a single arm $A_{1,1}$ does not contain a fort. The selected subset of $H$ also contains no fort that is a fort of $G$. Hence the displayed fort contains a minimal fort meeting both $H$ and the added vertices. This minimal fort is new: it is not supported entirely in $H$, it is not one of the local forts, and it is not a lifted fort $F\cup C_1$, since $z_1\notin Y$.

    We have therefore obtained
    $$
        f_m(G)\geq f_m(H)+L+1.
    $$
    Moreover,
    $$
        3(L+1) = 6+3\binom{t_1}{2}+3\binom{t_2}{2} \geq 6+(2t_1-2)+(2t_2-2) = 2+2T \geq W.
    $$
    This completes the proof.
\end{proof}

Now we are ready to state one of the main results of this section.

\begin{theorem}\label{thm:block-forts}
    Let $G$ be a block graph on $n$ vertices. Then $G$ has at least $n/3$ minimal forts.
\end{theorem}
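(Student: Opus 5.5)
We argue by strong induction on $n$. By Lemma~\ref{lem:fort-components} we may assume $G$ is connected, since the bound $n/3$ is additive over components. Small cases are checked directly: a single vertex is a fort, and a complete graph $K_m$ with $m\ge 2$ has $\binom{m}{2}\ge m/3$ minimal forts, namely the pairs. Trees are covered by Lemma~\ref{lem:tree-forts}. So assume $G$ is a connected block graph with at least two blocks. The plan is to locate a terminal configuration, meaning a small piece hanging off a cut vertex $d$, whose removal leaves a connected block graph $H$. Then one of Lemmas~\ref{lem:pendant-star}, \ref{lem:terminal-p3}, \ref{lem:terminal-hub}, \ref{lem:terminal-clique} gives $f_m(G)\ge f_m(H)+W/3$, where $W=n-|V(H)|$, and induction applied to $H$ (a block graph, as an induced subgraph closed under removing end structures) finishes.

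To find the configuration, consider the block--cut tree rooted at some block and pick a deepest cut vertex. Then take a deepest cut vertex $u$ whose descendant structure consists only of leaf blocks. Leaf blocks are either $K_2$ pendant edges or cliques all of whose other vertices have degree equal to the clique size minus one. More precisely, we choose a cut vertex $d$ such that below $d$ every descendant cut vertex has only leaf blocks beneath it. We then split into cases according to the shape of what hangs below $d$.

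\textbf{Case 1.} Some vertex $u$ has $p\ge2$ pendant neighbors. Apply Lemma~\ref{lem:pendant-star}: removing $u$ and its pendants leaves components $H_j$, and since $W=p+1\le 3\binom{p}{2}$, induction on each $H_j$ gives the bound.

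\textbf{Case 2.} Some leaf block is a clique of size at least $3$ attached at $d$. Apply Lemma~\ref{lem:terminal-clique} with $t_i=0$; or, more generally, when the clique's non-$d$ vertices carry pendant arms of length at most two, each vertex having at most one length-one arm by Case~1.

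\textbf{Case 3.} $d$ has a bridge to $u$, and $u$ carries pendant paths of length $1$ or $2$. If there are $t\ge2$ such arms, use Lemma~\ref{lem:terminal-hub}. If there is a single arm of length $2$, this is exactly the configuration of Lemma~\ref{lem:terminal-p3} with $W=3$. If there is a single arm of length $1$, then $u$ is a pendant path $d-u-x$; we go one level up and absorb it into the structure at $d$. Here $d$ becomes the $u$ or $z_i$ of a higher configuration.

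The main obstacle is this bookkeeping: showing that a deepest-possible choice always yields one of the four lemma shapes. The problematic residual structures are long pendant paths and single pendant edges, which must be shown to be absorbed. For pendant paths of length at least $3$, we peel off three vertices using Lemma~\ref{lem:terminal-p3}. For a vertex with a single length-one arm, we climb to its parent and argue it fits as an arm in Lemma~\ref{lem:terminal-hub} or Lemma~\ref{lem:terminal-clique}, using Case~1 to exclude two length-one arms at one vertex. One must also check that $H$ remains connected and nonempty; if $H$ would be empty, $G$ itself is a small star, clique-with-arms, or path, and these are handled directly or via Lemma~\ref{lem:tree-forts}.
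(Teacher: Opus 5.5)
Your ingredients are the paper's: strong induction, Lemma~\ref{lem:pendant-star} to exclude vertices with two pendant neighbours, then peeling a terminal piece with Lemma~\ref{lem:terminal-p3}, \ref{lem:terminal-hub} or \ref{lem:terminal-clique}, and Lemma~\ref{lem:tree-forts} for trees. Your individual case computations are also correct. The gap is in the step that carries the actual content of the proof. You must show that a connected non-tree block graph with no vertex carrying two pendant neighbours always contains one of these configurations with $H$ connected, or else is a clique with short arms. You call this ``the main obstacle'' and never carry it out.

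Your selection rule does not settle it. Root the block--cut tree at a triangle $\{p,d,q\}$. Let $p$ and $q$ each carry a long pendant path, and let $d$ carry two arms $d-a_1-b_1$ and $d-a_2-b_2$. Then $d$ satisfies your rule, since the only cut vertices below it are $a_1,a_2$, each with a single leaf block. Cases~1 and~2 do not apply, and the climb prescribed in Case~3 leads nowhere. The vertex $d$ cannot be the hub of Lemma~\ref{lem:terminal-hub}, which must be joined to $H$ by a single edge, whereas $d$ has the two neighbours $p,q$ outside its arms. Nor can $d$ be a $z_i$ of Lemma~\ref{lem:terminal-clique} on the triangle, since that would require one of $p,q$ to carry only arms of length at most two. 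Your climbing step tacitly assumes that the parent link is a bridge, or that the other vertices of the parent clique are already reduced to short arms. Guaranteeing exactly this is the job of the choice of configuration.

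The paper does it with a global extremal choice. Root the block--cut tree at a block $R$ of order at least three, and let $B$ be a block of order at least three at maximum distance from $R$. Everything below $B$ is then a forest of rooted trees, and the cases go as follows.
\begin{itemize}
\item A terminal non-branching path with at least three vertices beyond its attachment vertex is removed by Lemma~\ref{lem:terminal-p3}.
\item Failing that, take a branching vertex strictly below $B$ at maximum distance from $B$. It is joined to its parent by a bridge and carries only pendant paths of length one or two, so Lemma~\ref{lem:terminal-hub} applies.
\item Failing that, every vertex of $B$ carries only such arms, and Lemma~\ref{lem:terminal-clique} applies with $d$ the parent cut vertex of $B$.
\item When $B=R$, the graph is a clique $K_s$ with $s\geq 3$ plus arms.
\end{itemize}
In this last case the count you call ``direct'' still has to be supplied. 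In the notation of Lemma~\ref{lem:terminal-clique}, the minimal forts $C_i\cup C_j$ and $A_{i,j}\cup A_{i,k}$ give $3f_m(G)\geq 3\binom{s}{2}+2T-2s\geq s+2T\geq n$.

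Alternatively, you could take a cut vertex of maximum depth and climb at most two levels. This can also be made to work, but only after the same case analysis of the sibling vertices in the parent clique has been carried out.
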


\begin{proof}
    We proceed by strong induction on $n$. The result is immediate for $n=1$.

    Suppose first that $G$ is disconnected, with nonempty connected components $G_1,\ldots,G_k$. By Lemma~\ref{lem:fort-components} and the induction hypothesis,
    $$
        f_m(G) = \sum_{i=1}^{k}f_m(G_i) \geq \sum_{i=1}^{k}\frac{|V(G_i)|}{3} = \frac{n}{3}.
    $$
    We may therefore assume that $G$ is connected.

    If every block of $G$ has order two, then $G$ is a tree, and the result follows from Lemma~\ref{lem:tree-forts}. Hence assume that $G$ has a block of order at least three.

    If some vertex $u$ has $p\geq2$ pendant neighbors, apply Lemma~\ref{lem:pendant-star}. The graph obtained after deleting $u$ and its pendant neighbors has total order $n-p-1$. Thus, by the induction hypothesis,
    $$
        f_m(G) \geq \frac{n-p-1}{3}+\binom{p}{2} \geq \frac{n}{3},
    $$
    because $\binom{p}{2} \geq \frac{p+1}{3}$ for $p \geq 2$. We may therefore assume that no vertex of $G$ has two pendant neighbors.

    Choose a block $R$ of order at least three and root the block--cut tree of $G$ at $R$. Let $B$ be a block of order at least three at maximum distance from $R$. By the choice of $B$, every block strictly below $B$ is a bridge. Consequently, the subgraphs descending from vertices of $B$ are ordinary rooted trees.

    Consider a terminal non-branching path in one of these descendant trees. If it has at least three vertices after its attachment vertex, then its last three vertices form a path $d-a-b-c$ satisfying the hypotheses of Lemma~\ref{lem:terminal-p3}. Deleting $a,b,c$ leaves a connected block graph of order $n-3$. Hence
    $$
        f_m(G) \geq \frac{n-3}{3}+1 = \frac{n}{3}.
    $$

    We may therefore assume that every terminal non-branching path below $B$ has length at most two. Suppose that a vertex $u$ strictly below $B$ has at least two children in the rooted descendant tree. Choose $u$ at maximum distance from $B$. Its descendant branches are pairwise disjoint pendant paths of lengths one or two. At most one has length one, because otherwise $u$ would have two pendant neighbors. Thus the configuration at $u$ satisfies Lemma~\ref{lem:terminal-hub}. Deleting $u$ and all its descendants leaves a connected block graph $H$, and the induction hypothesis together with Lemma~\ref{lem:terminal-hub} gives
    $$
        f_m(G) \geq f_m(H)+\frac{|V(G)\setminus V(H)|}{3} \geq \frac{n}{3}.
    $$

    It remains to consider the case in which no vertex strictly below $B$ branches. Hence, at each vertex of $B$, the descendant subgraph is a collection of pendant paths of lengths one or two attached directly to that vertex. Again, at most one such path at each vertex has length one.

    Suppose first that $B \neq R$. Let $d$ be the unique cut vertex of $B$ lying on the path from $B$ to $R$, and write $B=\{d,z_1,\ldots,z_s\}$. Since $|B| \geq 3$, we have $s \geq 2$. Delete $B\setminus\{d\}$ and all descendant paths attached to its vertices. The remaining graph $H$ is a connected block graph, and the deleted part is precisely the terminal-clique configuration of Lemma~\ref{lem:terminal-clique}. Therefore,
    $$
        f_m(G) \geq f_m(H)+\frac{|V(G)\setminus V(H)|}{3} \geq \frac{|V(H)|}{3}+\frac{|V(G)\setminus V(H)|}{3} = \frac{n}{3}.
    $$

    Finally, suppose that $B=R$. Then $R$ is the only block of order at least three. Thus $G$ consists of the clique $R=\{z_1,\ldots,z_s\}$ for $s\geq3$, together with collections of pendant paths of lengths one or two attached to its vertices. Use the notation $t_i$, $T$, $C_i$, and $A_{i,j}$ from Lemma~\ref{lem:terminal-clique}. The sets $C_i\cup C_j$ with $i\neq j$, and $A_{i,j}\cup A_{i,k}$ with $j\neq k$ are distinct minimal forts. Consequently,
    $$
        f_m(G) \geq \binom{s}{2} + \sum_{i=1}^{s}\binom{t_i}{2}.
    $$
    Since $n\leq s+2T$, we have
    $$
        3f_m(G) \geq 3\binom{s}{2}+3\sum_{i=1}^{s}\binom{t_i}{2} \geq 3\binom{s}{2}+2T-2s \geq s+2T \geq n.
    $$
    Therefore $f_m(G)\geq n/3$, completing the induction.
\end{proof}

\section{Counting Compatible Collections of Forts}

Hicks et al.~\cite{hicks2022computational} posed as their Question~1 the problem of characterizing the graphs that admit polynomially many compatible sets of minimal forts. Our work is shaped by a subsequent correction. Furst, Hutchens, Mitchell, and Zhang~\cite{furst2025compatible} observed that the minimum-rank bound motivating that question relies on the assumption that the minimal supports of null vectors of a matrix $A\in\operatorname{Sym}(G)$ coincide with the minimal forts of $G$, and they exhibited the Petersen graph as a counterexample: every sub-collection of $\operatorname{Fort}_{\min}(P)$ attaining the transversal number $5$ fails to be compatible. To repair the bound they replace minimal forts by arbitrary forts, defining the fort transversal number $T(G)=\max\{\tau(\mathcal F):\mathcal F\subseteq\operatorname{Fort}(G)\text{ compatible}\}$ and proving $N(G)\le T(G)\le Z(G)$. Since it is this corrected quantity that the maximum-nullity and minimum-rank application requires, we study the analogue of Question~1 for compatible collections of arbitrary forts.

\begin{question}\label{q1:refined}
    For which graphs is the number of compatible sets of forts bounded by a polynomial in the order of the graph?
\end{question}

Our results show that the answer is negative on three broad classes: trees, connected non-star graphs of bounded maximum degree, and connected graphs of minimum degree linear in the order. The mechanism is that the quantity being bounded already dominates the number of forts itself, which we show is exponential on each of these classes. Whether a family with unbounded maximum degree and sublinear minimum degree can behave differently we do not know. The contrast with the count of \emph{minimal} forts is worth keeping in view: that quantity can be polynomial, and its extremal behaviour was studied by Becker,
Cameron, Hanely, Ong, and Previte~\cite{becker2025number}.

We write $V_{\geq 2}$ for the vertices of degree at least $2$, $L$ for the number of leaves, $\delta(G)$ for the minimum degree, $\alpha(G)$ for the independence number, and $\operatorname{dom}(G)$ for the number of dominating sets of a graph $G$.

It is convenient to phrase the fort condition through complements. For $S\subseteq V$ put $F=V\setminus S$. Then $F$ is a fort if and only if $S\neq V$ and no $s\in S$ has exactly one neighbor outside $S$. We call such a proper subset $S$ \emph{admissible}. Equivalently, by a result of Becker, Cameron, Hanely, Ong, and Previte~\cite[Thm.~10]{becker2025number}, admissible sets are the failed zero forcing sets of $G$, and minimal forts are complements of maximal failed zero forcing sets.

\begin{lemma}\label{lem:singleton}
    For every graph $G$, $\operatorname{cs}(G) \geq f(G)$ and $\operatorname{cs}_m(G)\geq f_m(G)$.
\end{lemma}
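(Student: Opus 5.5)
The plan is to exhibit an injection from forts into compatible collections by sending each fort $F$ to the singleton collection $\{F\}$. The key observation is that the compatibility condition only quantifies over pairs of \emph{distinct} members $F_1,F_2\in\mathcal{C}$. A singleton collection has no such pair, so the condition holds vacuously.

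The steps, in order, are as follows.

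First, fix any fort $F\in\operatorname{Fort}(G)$ and consider $\mathcal{C}=\{F\}$. This is a nonempty sub-collection of $\operatorname{Fort}(G)$, and it is compatible because there are no two distinct members to test.

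Second, the map $F\mapsto\{F\}$ is injective, since distinct forts give distinct singletons. Hence $\operatorname{cs}(G)\geq f(G)$.

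Third, apply the identical argument with $F$ ranging over $\operatorname{Fort}_{\min}(G)$. Each $\{F\}$ is then a nonempty compatible sub-collection of $\operatorname{Fort}_{\min}(G)$, which gives $\operatorname{cs}_m(G)\geq f_m(G)$.

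There is no real obstacle here. The only point needing care is to confirm from the Notation paragraph that compatibility is indeed required only for distinct $F_1,F_2$, so that $F_1=F_2$ never has to be checked. One should also confirm that the definition of $\operatorname{cs}$ counts nonempty collections, and singletons are nonempty, so they are counted. If one wanted more, the collection of all forts $\operatorname{Fort}(G)$ is also compatible (fort unions minus a vertex contain forts only in some cases), but this is not needed for the stated bound.
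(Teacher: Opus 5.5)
Your argument is correct and is exactly the paper's proof: singleton collections are vacuously compatible, and $F\mapsto\{F\}$ is injective. Drop the closing aside, though, because it is false. $\operatorname{Fort}(G)$ is not compatible in general: in $P_3$ the forts $\{1,3\}$ and $\{1,2,3\}$ share the endpoint $1$, yet $\{2,3\}$ contains no fort, which is why the paper notes that every compatible collection of forts of a path is a singleton.
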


\begin{proof}
    Every singleton collection of forts is compatible, and distinct forts give distinct singletons.
\end{proof}

By Lemma~\ref{lem:singleton}, polynomially many compatible sets of forts forces polynomially many forts. The rest of the section bounds $f(G)$ from below on three classes of graphs, in each case exponentially.

\begin{lemma}\label{lem:A}
    If $J\subseteq V_{\geq 2}$ is independent, then $V\setminus A$ is a fort for every $A\subseteq J$, and hence $f(G)\geq 2^{|J|}$. In particular, taking $J$ to be a maximum independent set of $G[V_{\geq 2}]$ gives $f(G)\geq 2^{\alpha(G[V_{\geq 2}])}$.
\end{lemma}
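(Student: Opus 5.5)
We verify the fort condition directly through the complement formulation introduced just above, namely that $F=V\setminus A$ is a fort if and only if $A\neq V$ and no vertex of $A$ has exactly one neighbor outside $A$. Fix an independent set $J\subseteq V_{\geq 2}$ and an arbitrary $A\subseteq J$.

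The first step is to check that $F=V\setminus A$ is nonempty, i.e.\ $A\neq V$. If $J=\varnothing$ there is nothing to check beyond $V\neq\varnothing$. Otherwise any $v\in J$ has degree at least $2$, so it has a neighbor $w$. Since $J$ is independent, $w\notin J\supseteq A$, and hence $w\in F$.

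The second step is the fort condition. The only vertices outside $F$ are those of $A$. Take $a\in A$. Because $A\subseteq J$ and $J$ is independent, no neighbor of $a$ lies in $A$, so $N_G(a)\subseteq F$. Therefore $|N_G(a)\cap F|=\deg(a)\geq 2$, since $a\in V_{\geq 2}$. In particular this count is never exactly one, and $F$ is a fort.

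For the counting step, note that distinct subsets $A\subseteq J$ give distinct complements $V\setminus A$. This yields $2^{|J|}$ distinct forts, so $f(G)\geq 2^{|J|}$. The case $A=\varnothing$ gives $F=V$, which is a fort vacuously and is counted. For the final claim, choose $J$ to be a maximum independent set of the induced subgraph $G[V_{\geq 2}]$. Such a set is independent in $G$ as well, because independence depends only on edges between its own vertices, and these are the same in $G$ and in $G[V_{\geq 2}]$. Its size is $\alpha(G[V_{\geq 2}])$, which gives the stated bound.

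There is no genuine obstacle in this argument. The only points needing care are nonemptiness of $F$ and the observation that the degree condition is taken in $G$ rather than in $G[V_{\geq 2}]$. That observation is harmless, since $N_G(a)\subseteq F$ regardless of which vertices have degree at least $2$.
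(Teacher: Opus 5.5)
Your proposal is correct and follows the paper's proof: pass to the complement, use independence of $J$ to get $N(a)\subseteq V\setminus A$ for each $a\in A$, and conclude that $a$ has $\deg(a)\geq 2$ neighbors in the fort. Your explicit check that $V\setminus A\neq\varnothing$, and your remark that a maximum independent set of $G[V_{\geq 2}]$ is independent in $G$, are details the paper leaves implicit.
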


\begin{proof}
    For $S=A\subseteq J$ and $s\in A$, independence forces every neighbor of $s$ outside $A=S$, so $|N(s)\cap(V\setminus S)|=\deg(s)\ge 2$. $F$ is a fort if and only if no $s\in S$ has exactly one neighbor outside $S$, yielding the result.
\end{proof}

\begin{lemma}\label{lem:B}
    Let $J$ be an independent set of support vertices, each with a non-leaf neighbor, and let $k_v$ be the number of leaf-neighbors of $v \in J$. Then $f(G) \geq 2^{\sum_{v\in J}k_v}$.
\end{lemma}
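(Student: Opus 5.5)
The plan is to use the complement formulation from just before Lemma~\ref{lem:singleton}: I will construct $2^{\sum_{v\in J}k_v}$ distinct admissible sets $S$, and their complements $V\setminus S$ are then distinct forts. For $v\in J$ write $\Lambda_v$ for its set of leaf-neighbors, so $|\Lambda_v|=k_v$. Every $S$ will lie inside $J\cup\bigcup_{v\in J}\Lambda_v$ and will be built by making an independent local choice at each $v\in J$. For each $v$, the allowed local choices are:
\begin{enumerate}[label=(\roman*)]
    \item $S$ meets $\{v\}\cup\Lambda_v$ in nothing;
    \item $S$ meets $\{v\}\cup\Lambda_v$ in $\{v\}\cup X$ for some $X\subseteq\Lambda_v$.
\end{enumerate}
There is one exception. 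If $v$ has exactly one non-leaf neighbor, we forbid $X=\Lambda_v$ in (ii).

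First, I count the options. There is $1$ option from (i). Option (ii) contributes $2^{k_v}$ choices, or $2^{k_v}-1$ in the exceptional case. So each $v$ has at least $2^{k_v}$ choices. The sets $\Lambda_v$ are pairwise disjoint, since a leaf has a unique neighbor, and $J$ is a set of distinct vertices. Hence distinct combinations of choices give distinct sets $S$, and there are at least $\prod_{v\in J}2^{k_v}$ of them.

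Next, I check admissibility. Each $v\in J$ has a non-leaf neighbor $w$. This $w$ is not a leaf, and by independence of $J$ it is not in $J$, so $w\notin S$. In particular $S\neq V$. The leaves in $S$ fall into two cases. A leaf $x\in S$ belongs to some $\Lambda_v$ with $v\in S$, so $x$ has $0$ neighbors outside $S$. The vertices $v\in J\cap S$ are the other case. By independence, no neighbor of $v$ in $J$ lies in $S$, and the only leaves in $S$ adjacent to $v$ are those in $X$. Its neighbors outside $S$ are therefore exactly its non-leaf neighbors together with $\Lambda_v\setminus X$. This number is at least $1$, and it equals $1$ only if $v$ has a single non-leaf neighbor and $X=\Lambda_v$, which is precisely the excluded case. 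So every $S$ is admissible, and $f(G)\ge 2^{\sum_{v\in J}k_v}$.

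The only delicate point is this admissibility check for $v\in S$. That is where both hypotheses are used: the non-leaf neighbor guarantees $S\ne V$ and at least one outside neighbor, and independence prevents choices at different vertices of $J$ from interacting. The single excluded subset in (ii) is exactly compensated by option (i), so the count is still at least $2^{k_v}$ at every $v$.
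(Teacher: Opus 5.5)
Your proof is correct and follows the paper's argument essentially verbatim. The paper uses the same two local options at each $v\in J$: omit $v$ and its leaves, or include $v$ with a subset $X$ of its leaves, forbidding $X=\Lambda_v$ exactly when $v$ has a single non-leaf neighbor. It makes the same count of $d_v+k_v-|X|$ outside neighbors, where $d_v$ is the number of non-leaf neighbors of $v$, and the same observation that option (i) compensates for the one forbidden subset. You are slightly more explicit than the paper in checking $S\neq V$ and the disjointness of the blocks $\{v\}\cup\Lambda_v$, which is what makes distinct choices yield distinct sets.
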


\begin{proof}
    For $v\in J$, let $d_v$ be the number of non-leaf neighbors of $v$; by hypothesis $d_v\geq1$. Independently for each $v\in J$, make one of the following choices:
    \begin{itemize}
        \item put neither $v$ nor any of its leaf neighbors in $S$; or
        \item put $v$ in $S$, choose a set $A_v$ of its leaf neighbors to put in $S$, subject to $d_v+k_v-|A_v|\neq1$.
    \end{itemize}
    No non-leaf neighbor of a vertex of $J$ is placed in $S$: such a neighbor is not a leaf of any vertex of $J$, and independence of $J$ prevents it from being another selected support vertex adjacent to $v$. Thus, when $v\in S$, $|N(v)\cap(V\setminus S)|=d_v+k_v-|A_v|\neq 1$. Every leaf placed in $S$ has its unique neighbor $v$ also in $S$, and therefore has zero neighbors outside $S$. Hence $S$ is admissible. If $d_v=1$, only the choice $A_v$ consisting of all $k_v$ leaves is forbidden; if $d_v\geq2$, no subset is forbidden. Including the first option, there are at least $2^{k_v}$ choices at $v$. Multiplying over $v\in J$ gives $f(G)\geq\prod_{v\in J}2^{k_v}=2^{\sum_{v\in J}k_v}$.
\end{proof}

\begin{lemma}\label{lem:C}
    If every vertex of $C\subseteq V$ has at least $2$ neighbors outside $C$, then $V \setminus S$ is a fort for every $S\subseteq C$; hence $f(G) \geq 2^{|C|}$. In particular, if $\delta(G) \geq d$ then every $S$ with $|S| \leq d-1$ is admissible, so $f(G) \ge \sum_{j=0}^{d-1} \binom nj$.
\end{lemma}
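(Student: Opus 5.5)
The argument goes through the complement formulation set up before Lemma~\ref{lem:singleton}: $F=V\setminus S$ is a fort if and only if $S$ is a proper subset of $V$ (so $F\neq\varnothing$) and no $s\in S$ has exactly one neighbor in $V\setminus S$. Given $S\subseteq C$, we therefore need to check two things: that $S\neq V$, and that each $s\in S$ has either zero or at least two neighbors outside $S$.

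For the main claim, fix $S\subseteq C$ and $s\in S$. Since $S\subseteq C$, we have $V\setminus C\subseteq V\setminus S$. Hence
\[
|N(s)\cap(V\setminus S)|\geq |N(s)\cap(V\setminus C)|\geq 2,
\]
and $s$ does not have exactly one neighbor outside $S$. Properness also needs $C\neq V$. This holds whenever $C$ is nonempty, because any vertex of $C$ has neighbors outside $C$. If $C=\varnothing$, then $S=\varnothing$ and $V\setminus S=V$, which is a fort as long as $V\neq\varnothing$. So every $S\subseteq C$ is admissible. Distinct sets $S$ give distinct complements $V\setminus S$, which yields $f(G)\geq 2^{|C|}$.

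For the \enquote{in particular} part, we do not fix a single set $C$. Instead, we check each set $S$ with $|S|\leq d-1$ directly. Take $s\in S$. At most $|S|-1\leq d-2$ of its neighbors lie in $S$, so the number of its neighbors outside $S$ is at least
\[
\deg(s)-(d-2)\geq d-(d-2)=2.
\]
Equivalently, we can apply the first part with $C=S$. Properness also needs care: if $|S|=n$ with $n\leq d-1$, then $\delta(G)\leq n-1<d$, which is impossible. Every such $S$ is therefore a proper subset. The complements are distinct forts, so summing over sizes $j=0,\dots,d-1$ gives $f(G)\geq\sum_{j=0}^{d-1}\binom{n}{j}$.

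None of these steps is hard. The part most likely to be overlooked is the properness of $S$, which covers the edge case where $S$ could be all of $V$. The rest is a direct neighbor count.
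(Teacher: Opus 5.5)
Your proof is correct and follows the paper's argument. It uses the same two counts: the at least two neighbors of each $s\in S$ outside $C$ also lie outside $S$, and $\deg(s)-(|S|-1)\geq 2$ in the minimum-degree part. Your explicit check that $S\neq V$, so that the complement is nonempty, is a detail the paper leaves implicit, and you handle it correctly.
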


\begin{proof}
    For $S \subseteq C$ and $s \in S$, the $\geq 2$ neighbors of $s$ outside $C$ lie outside $S$, so $|N(s) \cap (V \setminus S)| \geq 2$. For the minimum-degree statement, if $|S| \leq d-1$ then any $s \in S$ has $|N(s) \cap (V \setminus S)| \geq \deg(s)-(|S|-1) \geq d-(d-2) = 2$.
\end{proof}

\begin{lemma}\label{lem:D}
    If $V\setminus C$ is a dominating set of $G$, then $f(G) \geq \operatorname{dom}(G[C])$.
\end{lemma}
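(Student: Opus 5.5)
The plan is to inject dominating sets of $G[C]$ into forts of $G$ through the complement formulation. Given a dominating set $D$ of $G[C]$, I put
$$
F_D=(V\setminus C)\cup D,\qquad S_D=V\setminus F_D=C\setminus D,
$$
and show that $F_D$ is a fort. Since $D\subseteq C$, the map $D\mapsto F_D$ is injective: we recover $D$ as $F_D\cap C$. This gives $f(G)\ge\operatorname{dom}(G[C])$.

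First, $F_D$ is nonempty. If $V=\varnothing$ there is nothing to prove. Otherwise $V\setminus C$ dominates $G$, so it is nonempty.

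Next, I check the fort condition at each vertex $s\in S_D=C\setminus D$. Because $D$ dominates $G[C]$ and $s\in C\setminus D$, the vertex $s$ has a neighbor $x\in D$. Because $V\setminus C$ dominates $G$ and $s\notin V\setminus C$, the vertex $s$ also has a neighbor $y\in V\setminus C$. These neighbors are distinct, since $x\in C$ and $y\notin C$, and both lie in $F_D$. Hence $|N(s)\cap F_D|\ge 2$, so no vertex outside $F_D$ has exactly one neighbor inside it, and $F_D$ is a fort.

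There is no real obstacle here. The only points needing care are that the two neighbors are distinct, which follows from $x\in C$ and $y\notin C$, and that $F_D$ is nonempty, which follows from $V\setminus C\neq\varnothing$.
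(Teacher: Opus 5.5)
Your proof is correct and is essentially the paper's argument: the paper takes $S\subseteq C$ with $C\setminus S$ dominating $G[C]$ and sets $F=V\setminus S$, which is exactly your $F_D$ with $D=C\setminus S$. It then finds, for each $s\in S$, the same two distinct neighbors in $F$, one in $V\setminus C$ and one in $C\setminus S$. Your explicit check that $F_D\neq\varnothing$ (from $V\setminus C\neq\varnothing$) makes precise a point the paper leaves implicit when it calls $S$ admissible.
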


\begin{proof}
    Take $S \subseteq C$ with $C \setminus S$ a dominating set of $G[C]$, and put $F = V \setminus S$. Each $s \in S$ has a neighbor in $V \setminus C \subseteq F$ as $V \setminus C$ dominates $G$, and since $s \notin C \setminus S$, a neighbor in $C \setminus S \subseteq F$ as $C\setminus S$ dominates $G[C]$. These are distinct, so $|N(s) \cap F| \geq 2$. Since $F$ is a fort if and only if no $s\in S$ has exactly one neighbor outside $S$, we find that $S$ is admissible, and distinct dominating sets $C\setminus S$ give distinct forts.
\end{proof}

First note that Lemmas~\ref{lem:A}--\ref{lem:C} reproduce the exact counts of the extremal families.

\begin{proposition}\label{prop:exact}
    For the path, star, and complete graph we have:
    $$ 
        f(P_n)=F_n \quad (n\geq1), \quad \quad f(K_{1,n})=2^n-n\quad (n\geq1), \quad \text{and} \quad f(K_n)=2^n-n-1\quad (n\geq2).
    $$
    Moreover, $f(K_1)=1$.
\end{proposition}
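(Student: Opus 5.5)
The plan is to work throughout in the complement language introduced before Lemma~\ref{lem:A}. Here $f(G)$ is exactly the number of admissible sets, that is, proper subsets $S\subsetneq V$ such that no $s\in S$ has exactly one neighbor in $V\setminus S$. For each of the three families I will characterize the admissible sets completely and then count them. Lemmas~\ref{lem:A}, \ref{lem:B} and~\ref{lem:C} already supply the admissible families that give the lower bounds. The real content is the converse: showing that \emph{every} admissible set is of the form those lemmas produce. I take $F_n$ to be the Fibonacci numbers with $F_1=F_2=1$.

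\emph{Path.} Label $P_n$ as $v_1-\cdots-v_n$ and let $S$ be admissible.
\begin{itemize}
\item If an endpoint $v_1$ lies in $S$, then its unique neighbor $v_2$ must lie in $S$.
\item An internal vertex $v_i\in S$ must have both neighbors in $S$ or neither.
\end{itemize}
Propagating the first rule along the path, $v_1\in S$ forces $v_2\in S$, which forces $v_3\in S$, and so on, so $S=V$. This is excluded, and the same holds for $v_n$. So $S$ consists of internal vertices only. If two consecutive vertices $v_i,v_{i+1}$ lie in $S$, then $v_i$ has a neighbor inside $S$, so by the second rule $v_{i-1}\in S$ as well. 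Inducting leftward, this reaches $v_1\in S$, which we have just excluded. Hence $S$ is an independent subset of $\{v_2,\dots,v_{n-1}\}$. Conversely, every such set is admissible by Lemma~\ref{lem:A}. The number of independent sets of $P_{n-2}$ is $F_n$; for $n\le 2$ the empty path gives $1=F_1=F_2$, and for $n=3$ it gives $2=F_3$. This proves $f(P_n)=F_n$, and the case $n=1$ also gives $f(K_1)=1$.

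\emph{Star.} Let $K_{1,n}$ have center $c$. A leaf in $S$ forces $c\in S$, so either $S=\varnothing$ or $S=\{c\}\cup A$ for some set $A$ of leaves. In the second case $c$ has $n-|A|$ neighbors outside $S$, so admissibility together with $S\neq V$ means exactly $|A|\le n-2$. This gives
\[
1+\sum_{k=0}^{n-2}\binom nk=1+(2^n-n-1)=2^n-n .
\]
For $n=1$ this is consistent with $K_{1,1}=P_2$. The lower-bound half is Lemma~\ref{lem:B}.

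\emph{Complete graph.} In $K_n$ every $s\in S$ has exactly $n-|S|$ neighbors outside $S$. So $S$ is admissible if and only if $|S|\notin\{n-1,n\}$, which gives $2^n-n-1$ sets for $n\ge2$; this agrees with Lemma~\ref{lem:C}. For $n=1$ only $S=\varnothing$ qualifies, so we recover $f(K_1)=1$, and this is why that case is stated separately.

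The only step I expect to need care is the converse for the path: ruling out admissible sets that contain runs of two or more vertices, via the propagation argument toward an endpoint. The rest is direct counting.
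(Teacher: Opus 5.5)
Your proposal is correct and follows essentially the paper's route: you characterize the forts of each family completely and then count. You work in complement language throughout. The paper phrases $K_n$ and $K_{1,n}$ directly in terms of forts, and for $P_n$ it uses the same endpoint cascade ($x_1=0\Rightarrow x_2=0$, $x_v=0\Rightarrow x_{v-1}=x_{v+1}$) that you use. One harmless slip: Lemma~\ref{lem:B} does not actually apply to $K_{1,n}$, because its center has no non-leaf neighbor. Your direct two-sided characterization of the star already gives the count without that lemma.
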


\begin{proof}
    For $K_n$, $F$ is a fort if and only if $|F| \neq 1$. For $K_{1,n}$ the forts are the $\geq 2$-subsets of leaves together with $V$. For $P_n$, the endpoint condition $x_1=0\Rightarrow x_2=0$ cascades through $x_v=0\Rightarrow x_{v-1}=x_{v+1}$ to the zero vector, so every fort contains both endpoints; admissible sets are exactly the independent sets of the interior path, numbering $F_n$.

    Moreover, two distinct forts in $P_n$ share endpoint $1$, which no fort omits, so the compatibility clause fails and every compatible collection is a singleton. Therefore, $\operatorname{cs}(P_n) = f(P_n) = F_n$.
\end{proof}

\begin{theorem}\label{thm:tree}
    Every tree on $n \geq 3$ vertices satisfies $f(T) \geq 2^{n/6}$, hence $\operatorname{cs}(T) \geq 2^{n/6}$.
\end{theorem}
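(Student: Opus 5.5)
The plan is to combine Lemma~\ref{lem:A} and Lemma~\ref{lem:B}, which together cover the internal vertices and the leaves of $T$. Write $L$ for the number of leaves and $m=|V_{\geq 2}|=n-L$ for the number of internal vertices. The second claim, $\operatorname{cs}(T)\geq 2^{n/6}$, follows at once from the first by Lemma~\ref{lem:singleton}. So it suffices to show $f(T)\geq 2^{n/6}$, and the argument below in fact gives the stronger bound $2^{n/4}$ outside the star case.

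\textbf{Star case.} If $T$ is a star $K_{1,n-1}$ (with $n\geq 3$), Proposition~\ref{prop:exact} gives $f(T)=2^{n-1}-(n-1)$. This is at least $2^{n/6}$: for $n=3$ it equals $2\geq 2^{1/2}$, and for larger $n$ the left side grows much faster. From now on assume $T$ is not a star, so $m\geq 2$.

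\textbf{Bound from internal vertices.} The subgraph $T[V_{\geq 2}]$ is obtained from $T$ by deleting all leaves, so it is a tree on $m$ vertices. Being bipartite, it has a colour class of size at least $m/2$, so $\alpha(T[V_{\geq 2}])\geq m/2$. Lemma~\ref{lem:A} then gives $f(T)\geq 2^{m/2}$.

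\textbf{Bound from leaves.} Every leaf is adjacent to a support vertex, and the support vertices are internal. In a non-star tree with $n\geq 3$, every support vertex $v$ has a non-leaf neighbour: if all neighbours of $v$ were leaves, the component containing $v$ would be a star, and that component is all of $T$. Take a proper 2-colouring of the tree $T[V_{\geq 2}]$. Each colour class restricted to support vertices is an independent set of support vertices, each with a non-leaf neighbour, so it satisfies the hypotheses of Lemma~\ref{lem:B}. The two classes together carry all $L$ leaves, so one of them has $\sum_{v\in J}k_v\geq L/2$. Lemma~\ref{lem:B} then gives $f(T)\geq 2^{L/2}$.

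\textbf{Combining.} We get $f(T)\geq 2^{\max(m,L)/2}\geq 2^{(m+L)/4}=2^{n/4}\geq 2^{n/6}$.

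The main point needing care is the leaf step: checking the hypotheses of Lemma~\ref{lem:B}, namely independence of the chosen support vertices and the existence of a non-leaf neighbour for each. This is exactly why the star must be handled separately through Proposition~\ref{prop:exact}. The rest is bookkeeping.
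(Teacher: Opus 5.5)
Your proof is correct and follows the paper's argument. You treat the star via Proposition~\ref{prop:exact}, apply Lemma~\ref{lem:A} to a colour class of the internal subtree, and apply Lemma~\ref{lem:B} to the heavier colour class of support vertices. The only difference is that you combine the two bounds through $\max(m,L)/2\geq n/4$ rather than the paper's case split at $|V_{\geq 2}|=n/3$, which gives the slightly stronger bound $2^{n/4}$.
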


\begin{proof}
    If $T$ is a star, $f(T) = 2^{n-1}-(n-1) \geq 2^{n/6}$ for $n \geq 3$. Otherwise let $I = V_{\geq 2}$, which induces a sub-tree, and $L = n-|I|$. If $|I| \geq n/3$, the internal sub-tree is bipartite, so $\alpha(T[I]) \geq |I|/2 \geq n/6$ and Lemma~\ref{lem:A} gives $f(T) \geq 2^{n/6}$. If $|I| < n/3$, then $L > 2n/3$; every support vertex of a non-star tree has a non-leaf neighbor, so two-coloring the support forest and taking the heavier class $J$, Lemma~\ref{lem:B} gives $f(T) \geq 2^{L/2} > 2^{n/3}$.
\end{proof}

\begin{theorem}\label{thm:bdd}
    Every connected non-star graph of order $n$ and maximum degree $\Delta$ satisfies $f(G) \ge 2^{n/(2\Delta(\Delta+1))}$. Hence every bounded-degree family has exponentially many forts.
\end{theorem}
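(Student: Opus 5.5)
Following Theorem~\ref{thm:tree}, I will split on the number of leaves. Write $I=V_{\geq 2}$ and $L=n-|I|$. Since $G$ is connected, not a star, and (for $n\ge 3$) not $K_2$, I expect every support vertex to have a non-leaf neighbor. Indeed, if a support vertex $v$ had only leaf neighbors, connectivity would make $G$ the star centred at $v$. This is exactly the hypothesis of Lemma~\ref{lem:B}. The small cases $n\le 2$ are trivial, since then the bound is at most $2^{1/2}$ and $f(G)\ge 1$ (the whole vertex set is a fort).

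\textbf{Case 1: $|I|\geq n/2$.} I will apply Lemma~\ref{lem:A}. The induced graph $G[I]$ has maximum degree at most $\Delta$, so a greedy argument gives an independent set of size at least $|I|/(\Delta+1)\geq n/(2(\Delta+1))$. Hence $f(G)\geq 2^{n/(2(\Delta+1))}\geq 2^{n/(2\Delta(\Delta+1))}$.

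\textbf{Case 2: $L>n/2$.} Let $U$ be the set of support vertices. Each leaf has exactly one support neighbor, so $\sum_{v\in U}k_v=L$. I need an independent $J\subseteq U$ that carries a large share of the leaves. The graph $G[U]$ has maximum degree at most $\Delta$, so it can be properly coloured with $\Delta+1$ colours. The heaviest colour class $J$, weighted by $k_v$, then satisfies $\sum_{v\in J}k_v\geq L/(\Delta+1)>n/(2(\Delta+1))$. Lemma~\ref{lem:B} then gives $f(G)\geq 2^{n/(2(\Delta+1))}$. This is again at least the claimed bound, since $\Delta\geq1$.

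Both cases beat the stated exponent by a factor of $\Delta$. So I expect the only delicate point to be verifying the hypothesis of Lemma~\ref{lem:B} in Case 2. If that verification turned out to need something weaker, for instance because some components of $G[U]$ behave badly, I would fall back to a cruder selection. I would greedily choose support vertices pairwise at distance at least $3$, losing a factor of about $\Delta^2$ in the number of vertices chosen. Each chosen vertex still has at least one leaf, and this would still give an exponent of order $n/(2\Delta(\Delta+1))$. This slack suggests the stated constant accommodates such a cruder argument. Finally, the consequence for bounded-degree families follows immediately, since for fixed $\Delta$ the exponent is linear in $n$. Together with Lemma~\ref{lem:singleton}, this gives exponentially many compatible collections.
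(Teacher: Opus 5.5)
Your argument is correct and keeps the paper's skeleton. You split on $L\le n/2$ versus $L>n/2$. In the first case you apply Lemma~\ref{lem:A} to a greedy independent set of $G[V_{\geq 2}]$. In the second you apply Lemma~\ref{lem:B} to an independent set of support vertices, and your check that every support vertex has a non-leaf neighbour is exactly the hypothesis needed.

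The real difference is in the second case:
\begin{itemize}
\item The paper counts support vertices ($|U|\ge L/\Delta$) and takes an independent set $J$ with $|J|\ge L/(\Delta(\Delta+1))$. It then uses only $\sum_{v\in J}k_v\ge |J|$. This discards a factor of $\Delta$, and that loss is where the $\Delta(\Delta+1)$ in the stated exponent comes from.
\item You instead take the colour class of $G[U]$ that is heaviest with respect to the weights $k_v$. This keeps every leaf in the count and gives $\sum_{v\in J}k_v\ge L/(\Delta+1)$.
\end{itemize}
Your version is no harder than the paper's and actually proves the stronger bound $f(G)\ge 2^{n/(2(\Delta+1))}$. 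The paper's unweighted selection gains only a marginally shorter sentence.

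Two small corrections.
\begin{itemize}
\item Your treatment of $n\le 2$ is a non sequitur: $f(G)\ge 1$ does not give $f(G)\ge 2^{1/2}$. The correct reason is that the only connected graphs there are $K_1$, where $\Delta=0$ and the bound is undefined, and $K_2=K_{1,1}$. The latter is a star and must be excluded, since $f(K_2)=1<2^{1/2}$.
\item Your fallback paragraph is unnecessary and overstated. Choosing support vertices pairwise at distance at least $3$ from a pool of size at least $L/\Delta$ yields only about $L/(\Delta(\Delta^2+1))$ vertices. That is an exponent of order $n/\Delta^{3}$, not $n/(2\Delta(\Delta+1))$. Relaxing to distance $2$, i.e.\ mere independence, is exactly the paper's argument; your weighted choice already improves on both, so you can simply drop that paragraph.
\end{itemize}
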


\begin{proof}
    As $G[V_{\geq 2}]$ has maximum degree $\leq \Delta$, $\alpha(G[V_{\geq 2}]) \geq (n-L)/(\Delta+1)$, so Lemma~\ref{lem:A} gives $f(G) \geq 2^{(n-L)/(\Delta+1)}$, at least $2^{n/(2(\Delta+1))}$ when $L \leq n/2$. If $L > n/2$, there are $\geq L/\Delta$ support vertices inducing a graph of maximum degree $\leq \Delta$, hence an independent set $J$ of $\geq L/(\Delta(\Delta+1))$ of them, each with a non-leaf neighbor, by Lemma~\ref{lem:B} gives $f(G) \geq 2^{|J|}$.
\end{proof}

The remaining classes are dense. We use Lemma~\ref{lem:D} in combination with the following classical bound of Wagner~\cite{wagner2013note}.

\begin{lemma}\label{lem:wagner}
    Every graph on $n$ vertices with no isolated vertex has at least $2^{n/2}$ dominating sets.
\end{lemma}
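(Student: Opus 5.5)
Wagner's bound (Lemma~\ref{lem:wagner}) is cited from the literature, but a self-contained argument runs as follows. I would build the dominating sets by pairing vertices. First reduce to the case where $G$ is a forest by passing to a spanning forest $T$ of $G$. Deleting edges can only make domination harder, so every dominating set of $T$ is a dominating set of $G$. Since $G$ has no isolated vertex, $T$ has none either: each component of $G$ has at least two vertices, so its spanning tree has at least one edge. It therefore suffices to prove $\operatorname{dom}(T)\geq 2^{n/2}$ for a forest without isolated vertices.

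Next, I would decompose $T$ into vertex-disjoint stars, each with at least two vertices, covering all of $V$. The standard approach is to take a minimal edge set covering all vertices (a minimum edge cover). Each component of a minimal edge cover is a star, since a path of three edges would have a removable middle edge. Let the stars be $S_1,\ldots,S_k$, with centers $c_i$ and leaf sets $L_i$, so that $|L_i|\geq 1$. For a $K_2$ component, choose either endpoint as the center.

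For the counting step, within each star choose either $\{c_i\}\cup A$ for an arbitrary $A\subseteq L_i$, or the full leaf set $L_i$ (which dominates $c_i$, and also dominates itself). Every such choice dominates $S_i$, so the union over $i$ dominates $T$. These choices are distinct across stars because the stars are disjoint. The number of choices per star is $2^{|L_i|}+1$, which is at least $2^{(|L_i|+1)/2}$ whenever $|L_i|\geq1$. For $|L_i|=1$ this reads $3\geq 2$, and for larger $|L_i|$ it holds trivially since $2^{m}\geq 2^{(m+1)/2}$ for $m\ge1$. Multiplying over the stars gives $\operatorname{dom}(T)\geq 2^{\sum_i(|L_i|+1)/2}=2^{n/2}$.

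The main obstacle is justifying the star decomposition cleanly. An alternative that avoids the edge-cover argument is induction, removing a leaf's neighbor together with all of its pendant leaves. However, that can isolate other vertices, so I would prefer the minimum edge cover argument.
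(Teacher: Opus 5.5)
Your argument is correct. It differs from the paper mainly in that the paper gives no proof: it imports the bound from Wagner~\cite{wagner2013note}. Your steps all check out. The spanning forest has no isolated vertices. An inclusion-minimal edge cover of a forest has components of diameter at most two, so it is a disjoint union of stars with at least one leaf each; working in a forest is what lets the three-edge-path argument suffice, since in a general graph that argument alone would not exclude triangles. Finally, the $2^{|L_i|}+1$ local choices glue to distinct dominating sets because the stars partition $V$. Beyond being self-contained, your route gives a stronger constant: since $2^m+1\geq 5^{(m+1)/3}$ for every $m\geq1$, the same count yields $\operatorname{dom}(G)\geq 5^{n/3}$. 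If only the stated $2^{n/2}$ is wanted, there is an even shorter self-contained proof via Ore's observation. In a graph without isolated vertices, the complement of a minimal dominating set is again dominating, so some dominating set $D$ has $|D|\leq n/2$. Every superset of $D$ is then dominating, which gives at least $2^{n-|D|}\geq 2^{n/2}$ dominating sets. The citation is the most economical choice for the paper, but either self-contained argument makes the lemma independent of the reference.
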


\begin{theorem}\label{thm:dense}
    Let $G$ be connected. If $\delta(G)=\Omega(n)$ then $f(G)=2^{\Omega(n)}$. More generally, if $G$ has a dominating set $D$ with $|D| \leq (1-\epsilon) n$ such that $G[V\setminus D]$ has an induced subgraph on $\Omega(n)$ vertices with no isolated vertex, then $f(G)\geq \operatorname{dom}(G[V\setminus D]) \geq 2^{\Omega(n)}$.
\end{theorem}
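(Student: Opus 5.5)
The plan is to prove the general statement first and then derive the linear-minimum-degree case from it.

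\textbf{General statement.} Let $D$ be a dominating set of $G$ with $|D|\le(1-\epsilon)n$. Put $C=V\setminus D$, so that $V\setminus C=D$ dominates $G$. Lemma~\ref{lem:D} then gives $f(G)\ge\operatorname{dom}(G[C])$ directly.

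Next, let $X\subseteq C$ with $|X|=\Omega(n)$ be such that $G[X]$ has no isolated vertex. We bound $\operatorname{dom}(G[C])$ from below by extending dominating sets of $G[X]$: for each dominating set $Y$ of $G[X]$, the set $Y\cup(C\setminus X)$ dominates $G[C]$. The map $Y\mapsto Y\cup(C\setminus X)$ is injective because $Y\subseteq X$. Hence $\operatorname{dom}(G[C])\ge\operatorname{dom}(G[X])\ge 2^{|X|/2}=2^{\Omega(n)}$ by Lemma~\ref{lem:wagner}.

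\textbf{Linear minimum degree.} Suppose $\delta(G)\ge cn$. The simplest route is Lemma~\ref{lem:C}. With $d=\lfloor cn\rfloor$, every set of size at most $d-1$ is admissible, so
$$f(G)\ge\binom{n}{d-1}\ge\Bigl(\frac{n}{d-1}\Bigr)^{d-1}.$$
Since $d-1\le cn$ gives $n/(d-1)\ge 1/c$, this is $2^{\Omega(n)}$ once $c<1$ and $n$ is large. If $c$ is close to $1$, we instead use $\binom{n}{d-1}\ge 2^{(d-1)\log_2(n/(d-1))}$ with $d-1\approx n/2$, or simply note that $\binom{n}{\lfloor n/2\rfloor}$ is available whenever $d-1\ge n/2$. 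Both bounds are exponential.

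As a check that the general hypothesis also covers this case, we can exhibit the required $D$. A greedy or probabilistic argument gives a dominating set of size $O(\log n/c)$ when $\delta\ge cn$. Take such a $D$; then $|D|=o(n)$. Every vertex of $C=V\setminus D$ has at least $cn-|D|>0$ neighbours in $C$, so $G[C]$ itself has no isolated vertex and $|C|=n-o(n)$. This yields the bound $2^{(n-o(n))/2}$.

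\textbf{Main obstacle.} The only delicate point is the passage from the induced subgraph $X$ to $\operatorname{dom}(G[C])$: domination is not monotone under taking induced subgraphs. The padding with $C\setminus X$ described above resolves this. The rest is bookkeeping of the $\Omega$ constants, and the hypothesis $|D|\le(1-\epsilon)n$ is only needed to keep $C$ nonempty and of linear size.
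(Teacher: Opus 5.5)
Your proposal is correct and follows the paper's proof. It uses Lemma~\ref{lem:C} with a single binomial coefficient for the linear-minimum-degree case, and Lemma~\ref{lem:D} followed by Wagner's bound (Lemma~\ref{lem:wagner}) for the general case. The differences are cosmetic. You pass from $G[X]$ to $G[C]$ by the injection $Y\mapsto Y\cup(C\setminus X)$, whereas the paper discards the isolated vertices of $G[C]$ and implicitly uses that $X$ lies in the non-isolated part. Your separate ``$c$ close to $1$'' case is unnecessary, since $c<1$ is automatic and $(1/c)^{\Theta(n)}$ is already exponential; the paper's choice $j=\lfloor cn/2\rfloor$ avoids the issue altogether.
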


\begin{proof}
    For the first claim, suppose that $\delta(G)\geq cn$ for a fixed constant $c>0$ and all sufficiently large $n$. By Lemma~\ref{lem:C}, $f(G)\geq\sum_{j=0}^{\delta(G)-1}\binom{n}{j}$. Taking $j=\lfloor cn/2\rfloor$ gives $f(G)\geq\binom{n}{\lfloor cn/2\rfloor}=2^{\Omega(n)}$.
   
    For the second, take $C = V\setminus D$; since $D$ dominates, Lemma~\ref{lem:D} gives $f(G) \geq \operatorname{dom}(G[C])$. Isolated vertices of $G[C]$ lie in every dominating set of $G[C]$ and may be discarded without changing $\operatorname{dom}(G[C])$, so $\operatorname{dom}(G[C]) = \operatorname{dom}(G[C'])$ for the non-isolated part $C'$; by Lemma~\ref{lem:wagner}, $\operatorname{dom}(G[C']) \geq 2^{|C'|/2}=2^{\Omega(n)}$.
\end{proof}

\begin{corollary}\label{cor:resolution}
    For every connected graph in the union of the classes of Theorems~\ref{thm:tree}, \ref{thm:bdd}, and \ref{thm:dense}, $\operatorname{cs}(G)=2^{\Omega(n)}$. In particular no family of trees, of bounded-degree graphs, or of dense graphs as above has polynomially many compatible sets of forts.
\end{corollary}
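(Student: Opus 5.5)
The corollary follows by combining Lemma~\ref{lem:singleton} with the three exponential lower bounds on $f(G)$. Lemma~\ref{lem:singleton} gives $\operatorname{cs}(G)\geq f(G)$ for every graph, so it suffices to show $f(G)=2^{\Omega(n)}$ on each class, with the constant in $\Omega$ fixed by the class.

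For trees on $n\geq 3$ vertices, Theorem~\ref{thm:tree} gives $f(T)\geq 2^{n/6}$ directly. The finitely many trees with $n\le 2$ do not matter asymptotically.

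For connected non-star graphs of maximum degree at most a fixed $\Delta$, Theorem~\ref{thm:bdd} gives $f(G)\geq 2^{n/(2\Delta(\Delta+1))}$. The exponent is linear in $n$ with a constant depending only on $\Delta$. Stars are already covered by the tree case, since $K_{1,n}$ is a tree.

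For dense graphs, Theorem~\ref{thm:dense} supplies $2^{\Omega(n)}$ under either of its hypotheses. Here the implied constant depends only on the constants $c$ and $\epsilon$ defining the family.

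Chaining each bound with $\operatorname{cs}(G)\geq f(G)$ gives $\operatorname{cs}(G)=2^{\Omega(n)}$ for every graph in the union. For any infinite family inside one of these classes, the orders are unbounded, and $2^{\Omega(n)}$ eventually exceeds every polynomial $n^k$. Hence no such family has polynomially many compatible sets of forts, which is the second sentence of the corollary.

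The argument is mostly bookkeeping; the only delicate point is uniformity. The $\Omega$ must be read per class: it depends on $\Delta$ for bounded-degree families and on $c$ and $\epsilon$ for dense families. The statement should be interpreted as a bound for each fixed class rather than a single constant over the whole union. Small orders, where the theorems' hypotheses such as $n\ge 3$ or ``sufficiently large $n$'' may fail, are absorbed into the asymptotic notation.
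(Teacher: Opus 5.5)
Your proposal is correct and is essentially the paper's argument: Lemma~\ref{lem:singleton} gives $\operatorname{cs}(G)\geq f(G)$, and Theorems~\ref{thm:tree}, \ref{thm:bdd}, and \ref{thm:dense} supply the exponential lower bounds on $f(G)$ on each class. You are right that the constant in $\Omega$ is fixed per class; one small imprecision is that under the second hypothesis of Theorem~\ref{thm:dense} this constant comes from the size of the isolated-vertex-free induced subgraph of $G[V\setminus D]$, not from $\epsilon$.
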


\section{Counting Zero Forcing Minimal Forts on Trees}

Bounds on the number of zero forcing minimal forts of a tree, together with an algorithm for computing them, were obtained by Nguyen and Kenter~\cite{dat2026number}. We take a different route and give an algorithm that computes $f_m(T)$, the number of minimal forts of a tree $T$, exactly, in linear time and linear space.

Throughout this subsection $T=(V,E)$ is a tree on $n$ vertices rooted at a distinguished vertex $r\in V$, and $N(v)$ denotes the open neighborhood of $v$ in $T$. For $v\in V$ we write $\mathrm{ch}(v)$ for the set of children of $v$, $q_v=|\mathrm{ch}(v)|$, and $T_v$ for the sub-tree induced by $v$ together with all of its descendants. We fix once and for all an arbitrary ordering $c_1,\ldots,c_{q_v}$ of $\mathrm{ch}(v)$, and for $0\leq i\leq q_v$ we let $T_v^{\,i}$ denote the sub-tree induced by $v$ together with $V(T_{c_1}),\ldots,V(T_{c_i})$, so that $T_v^{\,0}$ consists of $v$ alone and $T_v^{\,q_v}=T_v$.

We begin with the effect of deleting a single vertex from a fort. Let $F$ be a fort of $T$ and let $v\in F$. A vertex $u\in V$ is a \emph{witness for the removal of $v$} if $u\in V\setminus(F\setminus\{v\})$ and $|N(u)\cap(F\setminus\{v\})|=1$. Thus, when $|F|\geq2$ the set $F\setminus\{v\}$ is nonempty and fails to be a fort precisely when a witness for the removal of $v$ exists; when $F=\{v\}$ the deletion is empty and is not a fort by definition, whether or not a witness exists.

\begin{lemma}\label{lem:witness}
    Let $F$ be a fort of $T$ and $v\in F$. Then a witness for the removal of $v$ is either $v$ itself or a vertex in $N(v)\setminus F$. More precisely, $u$ is a witness for the removal of $v$ if and only if one of the following holds:
	\begin{enumerate}[label=\upshape(\roman*)]
        \item\label{it:self} $u=v$ and $|N(v)\cap F|=1$; or
		\item\label{it:nbr}  $u\in N(v)\setminus F$ and $|N(u)\cap F|=2$.
	\end{enumerate}
\end{lemma}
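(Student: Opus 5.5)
We prove the characterization by a direct case analysis on where a candidate witness $u$ can lie, using only the definition of a witness and the fort property of $F$. Put $F'=F\setminus\{v\}$. A vertex $u$ is a witness exactly when $u\notin F'$ and $|N(u)\cap F'|=1$. The vertices outside $F'$ are $v$ itself together with the vertices of $V\setminus F$, so there are three cases: $u=v$, $u\notin F$ with $u$ adjacent to $v$, and $u\notin F$ with $u$ not adjacent to $v$. The tree structure plays no role beyond the absence of loops, which ensures $v\notin N(v)$.

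\emph{Case $u=v$.} Since $v\notin N(v)$, we have $N(v)\cap F'=N(v)\cap F$. Thus $v$ is a witness if and only if $|N(v)\cap F|=1$, which is condition~\ref{it:self}.

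\emph{Case $u\notin F$ and $u\in N(v)$.} Here $v\in N(u)\cap F$, so $|N(u)\cap F'|=|N(u)\cap F|-1$. Hence $u$ is a witness if and only if $|N(u)\cap F|=2$, which is condition~\ref{it:nbr}.

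\emph{Case $u\notin F$ and $u\notin N(v)$.} Now $N(u)\cap F'=N(u)\cap F$. Because $F$ is a fort and $u\notin F$, this intersection does not have size exactly $1$. So $u$ is never a witness. This case is the only place the fort hypothesis is used, and it is what rules out witnesses away from $v$, giving the first sentence of the lemma.

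Combining the three cases yields both directions of the ``if and only if'' at once, since each case is itself an equivalence. No step is a real obstacle; the only point requiring care is tracking whether $v$ is counted in $N(u)\cap F$, which is exactly what distinguishes the three cases.
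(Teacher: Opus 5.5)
Your proof is correct and follows essentially the same approach as the paper: both split on $u=v$ versus $u\in V\setminus F$ and track whether $v$ is counted in $N(u)\cap F$. The only difference is that you split the second case by adjacency to $v$ up front, whereas the paper uses the fort property (the count cannot equal $1$ and drops by at most one) to force $v\in N(u)$ and $|N(u)\cap F|=2$.
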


\begin{proof}
	Let $F' = F \setminus \{v\}$. By definition $u$ is a witness for the removal of $v$ exactly when $u \in V \setminus F'$ and $|N(u) \cap F'| = 1$. Since $V \setminus F' = (V \setminus F) \cup \{v\}$, every candidate $u$ falls into one of two cases. If $u = v$, as $v \notin F'$, $v$ is a candidate, and since $v \notin N(v)$ we have $N(v) \cap F' = N(v) \cap F$. Hence $v$ is a witness if and only if $|N(v) \cap F| = 1$.

	If $u \in V \setminus F$, because $F$ is a fort, $|N(u) \cap F| \neq 1$. Deleting $v$ gives $|N(u) \cap F'| = |N(u) \cap F| - \mathbf{1}[v \in N(u)]$. For $u$ to be a witness we need $|N(u) \cap F'| = 1$; since the count can drop by at most one, this forces $v \in N(u)$ and $|N(u) \cap F| = 2$, i.e. $u \in N(v) \setminus F$ with $|N(u) \cap F| = 2$.

	Conversely both conditions clearly produce a witness. Finally, any $u \in V \setminus F$ with $v \notin N(u)$ has $|N(u) \cap F'| = |N(u) \cap F| \neq 1$ and is never a witness, so the two cases are exhaustive.
\end{proof}

Quantifying Lemma~\ref{lem:witness} over all vertices of $F$ yields a necessary condition for minimality, stated here for an arbitrary graph.

\begin{proposition}\label{prop:minchar}
    Let $F$ be a fort of a graph $G$ with $|F|\geq2$. Then no set of the form $F\setminus\{v\}$ with $v\in F$ is a fort of $G$ if and only if every $v\in F$ satisfies at least one of the following: either $|N(v)\cap F|=1$, or some $u\in N(v)\setminus F$ satisfies $|N(u)\cap F|=2$. In particular, every minimal fort with at least two vertices satisfies this condition at each of its vertices.
\end{proposition}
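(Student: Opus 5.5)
The plan is to reduce Proposition~\ref{prop:minchar} to a vertex-by-vertex application of the witness analysis of Lemma~\ref{lem:witness}. That lemma is stated for the tree $T$, but its proof never uses acyclicity, so it holds verbatim in any graph $G$. Fix a fort $F$ of $G$ with $|F|\geq 2$ and a vertex $v\in F$. Put $F'=F\setminus\{v\}$; since $|F|\geq2$, $F'$ is nonempty.

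First, I will use the definition of a fort. The set $F'$ fails to be a fort exactly when some $u\in V\setminus F'$ has $|N(u)\cap F'|=1$, that is, exactly when a witness for the removal of $v$ exists.

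Second, I will re-run the two-case argument of Lemma~\ref{lem:witness} in $G$:
\begin{enumerate}[label=\upshape(\roman*)]
\item The candidate $u=v$ is a witness if and only if $|N(v)\cap F|=1$, because $v\notin N(v)$ and so $N(v)\cap F'=N(v)\cap F$.
\item A candidate $u\in V\setminus F$ has $|N(u)\cap F|\neq1$ because $F$ is a fort. Deleting $v$ lowers this count by $\mathbf{1}[v\in N(u)]$, so $u$ is a witness if and only if $u\in N(v)\setminus F$ and $|N(u)\cap F|=2$.
\end{enumerate}
Combining the two steps, $F\setminus\{v\}$ is not a fort if and only if $v$ satisfies one of the two stated alternatives. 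Quantifying over all $v\in F$ then gives the claimed equivalence.

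For the ``in particular'' clause, note that if $F$ is a minimal fort then no proper nonempty subset of $F$ is a fort. In particular no $F\setminus\{v\}$ is a fort, so the condition holds at every vertex.

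There is no real obstacle here. The only point needing care is confirming that Lemma~\ref{lem:witness} transfers from trees to arbitrary graphs, which I will do by simply repeating its short argument in $G$, as above. I will also remark that the condition is only necessary for minimality, not sufficient, since a fort could contain a smaller fort that is not obtained by deleting a single vertex.
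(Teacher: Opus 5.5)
Your proposal is correct and follows essentially the same route as the paper, which also applies the witness characterization of Lemma~\ref{lem:witness} vertex by vertex and then notes that a minimal fort has no fort among its co-singleton subsets. Your explicit check that the lemma's argument never uses acyclicity is a useful addition, since the paper applies the tree-stated lemma to a general graph without comment.
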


\begin{proof}
	By Lemma~\ref{lem:witness} the set $F\setminus\{v\}$ fails to be a fort exactly when $v$ admits a witness, and the two alternatives in the statement are the two kinds of witness. A minimal fort with at least two vertices has no fort among its proper subsets, hence none among its co-singleton subsets.
\end{proof}

The condition of Proposition~\ref{prop:minchar} is necessary but not sufficient, and the reason is that forts are closed under unions: if $F_1$ and $F_2$ are forts and $u\notin F_1\cup F_2$, then $|N(u)\cap F_1|\neq1$ and $|N(u)\cap F_2|\neq1$ force $|N(u)\cap(F_1\cup F_2)|\neq1$, since the latter count vanishes when both counts vanish and is at least two otherwise. A fort may therefore contain a fort properly while every one of its vertices still admits a witness.

\begin{remark}\label{ex:notminimal}
	Let $T$ be the double star obtained from an edge $c_1c_2$ by attaching leaves $x_1,x_2,x_3$ to $c_1$ and $y_1,y_2,y_3$ to $c_2$, and let $F=\{x_1,x_2,y_1,y_2\}$. Since $|N(c_1)\cap F|=|N(c_2)\cap F|=2$, every vertex of $F$ admits a witness, so no co-singleton subset of $F$ is a fort; nevertheless $F$ contains the fort $\{x_1,x_2\}$ properly. The phenomenon is not confined to unions of disjoint forts: in the tree with edges $x_1s$, $x_2s$, $st$ and $tz$, the set $F=\{x_1,x_2,s,z\}$ is a fort, admits a witness at each of its vertices, is not a disjoint union of two forts, and properly contains the fort $\{x_1,x_2\}$, which is obtained from it by deleting $s$ and $z$ simultaneously.
\end{remark}

Minimality must therefore be certified against the deletion of arbitrary subsets, and this is what the following reformulation records.

\begin{definition}\label{def:split}
	Let $F$ be a fort of a graph $G$. A \emph{split} of $F$ is an ordered partition $F=D\sqcup K$ into a \emph{deleted part} $D$ and a \emph{kept part} $K$. The split is \emph{valid} if $D\neq\emptyset$, $K\neq\emptyset$, and $K$ is a fort of $G$.
\end{definition}

\begin{lemma}\label{lem:split}
	A fort $F$ of a graph $G$ is minimal if and only if $F$ admits no valid split.
\end{lemma}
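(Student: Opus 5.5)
This is essentially a restatement of the definition of minimality, so the plan is to prove the two directions of the equivalence directly. The only work is translating between "proper subset that is a fort" and "valid split", and the translation is a bijection: a split $F=D\sqcup K$ is determined by its kept part $K\subseteq F$, with $D=F\setminus K$.

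First, suppose $F$ is not minimal. By definition there is a fort $K$ of $G$ with $K\subsetneq F$. Put $D=F\setminus K$.
\begin{enumerate}
\item Since $K$ is a proper subset of $F$, we have $D\neq\emptyset$.
\item Forts are nonempty by definition, so $K\neq\emptyset$.
\item $K$ is a fort of $G$ by assumption.
\end{enumerate}
Hence $(D,K)$ is a valid split of $F$. Contrapositively, if $F$ admits no valid split, then $F$ is minimal.

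Conversely, suppose $F$ admits a valid split $F=D\sqcup K$. Then $K$ is a fort of $G$, and $K\subsetneq F$ because $D\neq\emptyset$. So $F$ properly contains a fort and is not minimal. Contrapositively, a minimal fort admits no valid split.

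There is no real obstacle here. The only point needing care is the nonemptiness of $K$. In the first direction it comes for free from the convention that forts are nonempty. In the second direction it is imposed explicitly in Definition~\ref{def:split}, which rules out treating the empty set as a fort.
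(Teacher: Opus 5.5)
Your proof is correct and follows the same argument as the paper: in both directions you pass between a fort $K$ with $\emptyset\neq K\subsetneq F$ and the valid split $(F\setminus K,\,K)$, using that forts are nonempty. One minor point: the direction from a valid split to non-minimality never uses the clause $K\neq\emptyset$, since $K$ being a fort already makes it nonempty.
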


\begin{proof}
	If $F=D\sqcup K$ is a valid split, then $K$ is a fort with $\emptyset\neq K\subsetneq F$, so $F$ is not minimal. Conversely, if $F$ is not minimal, then some fort $K$ satisfies $\emptyset\neq K\subsetneq F$, and $D=F\setminus K$ yields a valid split.
\end{proof}

Lemma~\ref{lem:witness} is the case $|D|=1$ of Lemma~\ref{lem:split}: a fort admits a valid split whose deleted part is a singleton if and only if one of its vertices admits no witness. The algorithm developed below tracks splits with deleted parts of arbitrary size, and the point that makes this possible in linear time is that a bounded amount of information about the boundary of a sub-tree suffices.

Let $F\subseteq V$ be a candidate vertex set. We call the pair $(F,T_v^{\,i})$ a \emph{configuration}, and we call it \emph{locally fortified} if $|N(u)\cap F|\neq1$ for every $u\in V(T_v^{\,i})\setminus(F\cup\{v\})$. Since $N(u)\subseteq V(T_v^{\,i})$ for every $u\in V(T_v^{\,i})\setminus\{v\}$, being locally fortified is a property of the configuration alone; and $F$ is a fort of $T$ if and only if $F\neq\emptyset$, the configuration $(F,T_r)$ is locally fortified, and either $r\in F$ or $|N(r)\cap F|\neq1$.

\begin{definition}\label{def:partial}
	A \emph{partial split} of a configuration $(F,T_v^{\,i})$ is an ordered partition $F\cap V(T_v^{\,i})=D'\sqcup K'$ such that $|N(u)\cap K'|\neq1$ for every $u\in V(T_v^{\,i})\setminus(K'\cup\{v\})$.
\end{definition}

The condition at $v$ itself, namely $|N(v)\cap K|\neq1$ for the eventual global kept part $K$, is deliberately omitted from Definition~\ref{def:partial}; we call it the \emph{pending condition}. It is the only condition of a global split that involves vertices outside $T_v^{\,i}$, and it is required only when $v\notin K$. Every other condition is intrinsic to a sub-tree, which is what makes a bottom-up computation possible.

\begin{definition}\label{def:type}
    The \emph{boundary type} of a partial split $(D',K')$ of $(F,T_v^{\,i})$ is the quadruple $\tau=(\sigma,\gamma,\delta,\kappa)$ in which $\sigma\in\{\mathrm{out},\mathrm{del},\mathrm{keep}\}$ records whether $v\notin F$, $v\in D'$, or $v\in K'$; in which $\gamma=\min\{|\{c_1,\ldots,c_i\}\cap K'|,\,2\}$ when $\sigma\neq\mathrm{keep}$ and $\gamma=0$ otherwise; and in which $\delta=\mathbf{1}[D'\neq\emptyset]$ and $\kappa=\mathbf{1}[K'\neq\emptyset]$.
\end{definition}

The counter $\gamma$ is capped at two because the pending condition compares $|N(v)\cap K|$ with one only, so that counts of two or more need not be distinguished; the same convention is applied to the counter $\lambda$ below. When $\sigma=\mathrm{keep}$ the pending condition is vacuous and the counter carries no information, which is why it is normalized to zero.

\begin{definition}\label{def:subsume}
	A boundary type $(\sigma,\gamma,\delta,\kappa)$ \emph{subsumes} a boundary type $(\sigma',\gamma',\delta',\kappa')$ if $\sigma=\sigma'$, $\delta\geq\delta'$, $\kappa\geq\kappa'$, and either $\gamma=\gamma'$ or $\gamma=2$.
\end{definition}

\begin{lemma}\label{lem:compress}
	Let $s$ and $s'$ be partial splits of the same configuration $(F,T_v^{\,i})$ such that the boundary type of $s$ subsumes the boundary type of $s'$. If some assignment of the vertices of $F\setminus V(T_v^{\,i})$ to the deleted and the kept class extends $s'$ to a valid split of $F$, then the same assignment extends $s$ to a valid split of $F$.
\end{lemma}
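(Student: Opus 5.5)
Fix an assignment of the vertices of $F\setminus V(T_v^{\,i})$, say $F\setminus V(T_v^{\,i})=D''\sqcup K''$, such that $(D'\cup D'',K'\cup K'')$ is a valid split of $F$, where $s'=(D',K')$. Let $s=(D_s,K_s)$ and put $D=D_s\cup D''$, $K=K_s\cup K''$. The plan is to check the three requirements of a valid split in turn: $D\neq\emptyset$, $K\neq\emptyset$, and $|N(u)\cap K|\neq 1$ for every $u\notin K$. We sort the vertices $u\notin K$ by location: inside $T_v^{\,i}$ but different from $v$; the vertex $v$ itself; and vertices outside $T_v^{\,i}$.

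Nonemptiness comes from $\delta\geq\delta'$ and $\kappa\geq\kappa'$. If $D''\neq\emptyset$ then $D\neq\emptyset$ at once. Otherwise $D'\neq\emptyset$ (since the extended $s'$ is valid), so $\delta'=1$, hence $\delta=1$ and $D_s\neq\emptyset$. The argument for $K$ is identical using $\kappa$.

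For $u\in V(T_v^{\,i})\setminus(K_s\cup\{v\})$, every neighbor of $u$ lies in $T_v^{\,i}$, so $N(u)\cap K=N(u)\cap K_s$. This count differs from one because $s$ is a partial split.

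For $u$ outside $T_v^{\,i}$, the only possible neighbor of $u$ inside $T_v^{\,i}$ is $v$ (namely when $u$ is the parent of $v$, or a later child $c_j$ with $j>i$). Since $\sigma=\sigma'$, we have $v\in K_s$ if and only if $v\in K'$. Therefore $N(u)\cap K$ and $N(u)\cap(K'\cup K'')$ have the same size. In addition, $u\in K$ if and only if $u\in K'\cup K''$, so the condition at $u$ carries over from the valid extension of $s'$.

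The main step is the pending condition at $v$, which arises only when $v\notin K$, i.e.\ $\sigma\in\{\mathrm{out},\mathrm{del}\}$. Write
\[
|N(v)\cap K|=|\{c_1,\ldots,c_i\}\cap K_s|+e,
\]
where $e$ counts the neighbors of $v$ lying outside $T_v^{\,i}$ that belong to $K''$. The same $e$ appears for $s'$, and there $|\{c_1,\ldots,c_i\}\cap K'|+e\neq 1$.

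If $\gamma=\gamma'<2$, the two child counts are equal, so the two totals coincide and the condition transfers. If $\gamma=2$, then $|N(v)\cap K|\geq 2$, so it is again different from one. This capped case is exactly why the subsumption relation allows $\gamma=2$ to dominate any $\gamma'$, and it is where care is needed.

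Combining the three location cases with nonemptiness, $(D,K)$ is a valid split of $F$, which proves the lemma.
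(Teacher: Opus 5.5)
Your proof is correct and follows essentially the same route as the paper's. The exterior vertices are handled by $N(u)\cap V(T_v^{\,i})\subseteq\{v\}$ together with $\sigma=\sigma'$, interior vertices by the partial-split property of $s$, the pending condition at $v$ by the exact-versus-capped counter dichotomy, and nonemptiness by $\delta\geq\delta'$ and $\kappa\geq\kappa'$; only rename your outside count $e$, which clashes with the emptiness bit of the state.
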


\begin{proof}
	Let $(D,K)$ and $(D',K')$ be the two extended partitions. They agree outside $V(T_v^{\,i})$, and $v$ lies in the same class in both because the two types share $\sigma$. We check that $K$ is a fort. For $u\in V(T_v^{\,i})\setminus(K\cup\{v\})$ the condition $|N(u)\cap K|\neq1$ is intrinsic to $T_v^{\,i}$ and holds because $s$ is a partial split. For $u\notin V(T_v^{\,i})$ with $u\notin K$ we have $N(u)\cap V(T_v^{\,i})\subseteq\{v\}$, so $|N(u)\cap K|=|N(u)\cap K'|$, which differs from one because $K'$ is a fort. There remains the case $u=v$ with $v\notin K$. Write $\gamma$ and $\gamma'$ for the counters of $s$ and $s'$. If $\gamma=2$, then $v$ has at least two kept children, so $|N(v)\cap K|\geq2$. Otherwise $\gamma=\gamma'\in\{0,1\}$, both counters are exact, and $|N(v)\cap K|=|N(v)\cap K'|\neq1$. Finally $\delta\geq\delta'$ and $\kappa\geq\kappa'$, together with the fact that the two partitions agree outside $V(T_v^{\,i})$, give $D\neq\emptyset$ and $K\neq\emptyset$.
\end{proof}

By Lemma~\ref{lem:compress} a subsumed boundary type may be discarded: for deciding whether a configuration extends to a valid split, only the set of achievable boundary types up to subsumption matters. That set is described by five bounded quantities.

\begin{definition}\label{def:state}
	Let $(F,T_v^{\,i})$ be a locally fortified configuration. Its \emph{state} is the quintuple $(b,\lambda,e,\alpha,M)$, where $b=\mathbf{1}[v\in F]$; $\lambda=\min\{|\{c_1,\ldots,c_i\}\cap F|,\,2\}$; $e=\mathbf{1}[F\cap V(T_v^{\,i})=\emptyset]$; $\alpha=1$ if some partial split places $v$ in the kept class and has a nonempty deleted class, and $\alpha=0$ otherwise; and $M\subseteq\{0,1,2\}$ is the set of counter values attained by the partial splits that place $v$ outside the kept class and have both classes nonempty, reduced by discarding the values $0$ and $1$ whenever $2\in M$.
\end{definition}

The reduction of $M$ is licensed by Lemma~\ref{lem:compress} and leaves five possibilities, namely $\emptyset$, $\{0\}$, $\{1\}$, $\{0,1\}$ and $\{2\}$. Since $e=1$ forces $b=0$, $\lambda=0$, $\alpha=0$ and $M=\emptyset$, since $b=1$ forces $e=0$, and since $b=0$ forces $\alpha=0$, at most $1+3\cdot5+3\cdot2\cdot5=46$ states occur. Storing $M$ as three bits, a state consists of one counter and five bits.

\begin{lemma}\label{lem:state}
	Every locally fortified configuration $(F,T_v^{\,i})$ admits the \emph{all-kept} partial split $(\emptyset,\,F\cap V(T_v^{\,i}))$ and the \emph{all-deleted} partial split $(F\cap V(T_v^{\,i}),\,\emptyset)$, and its set of achievable boundary types, reduced by subsumption, is obtained by discarding the subsumed elements of
	$$
    	\mathcal{T}(b,\lambda,e,\alpha,M) =
    	\begin{cases}
    		\{(\mathrm{out},0,0,0)\}, & b=0,\ e=1,\\[1mm]
    		\{(\mathrm{out},\lambda,0,1),\,(\mathrm{out},0,1,0)\} \cup\{(\mathrm{out},\gamma,1,1):\gamma\in M\}, & b=0,\ e=0,\\[1mm]
    		\{(\mathrm{keep},0,\alpha,1),\,(\mathrm{del},0,1,0)\} \cup\{(\mathrm{del},\gamma,1,1):\gamma\in M\}, & b=1,
    	\end{cases}
	$$
	where $(b,\lambda,e,\alpha,M)$ is the state of the configuration.
\end{lemma}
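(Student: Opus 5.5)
We first check that the two trivial splits are partial splits of any locally fortified configuration $(F,T_v^{\,i})$. Then we compute the boundary type of every partial split and show that each one is subsumed by an element of $\mathcal{T}(b,\lambda,e,\alpha,M)$. Conversely, we show that every element of $\mathcal{T}$ is the boundary type of an actual partial split. Discarding subsumed elements on both sides then gives the stated reduced set.

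For the two trivial splits, consider first the all-kept split with $K'=F\cap V(T_v^{\,i})$. For $u\in V(T_v^{\,i})\setminus(F\cup\{v\})$ we have $N(u)\subseteq V(T_v^{\,i})$, so $N(u)\cap K'=N(u)\cap F$. This count differs from one because the configuration is locally fortified. For the all-deleted split, $K'=\emptyset$, so every count is $0\neq 1$. Their boundary types are read off directly from Definition~\ref{def:type}:
\begin{itemize}
\item The all-kept split has $\sigma$ equal to $\mathrm{out}$ or $\mathrm{keep}$ according to $b$. Its counter is $\gamma=\min\{|\mathrm{ch}\cap F|,2\}=\lambda$ when $b=0$, and $\gamma=0$ when $b=1$. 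Its bits are $\delta=0$ and $\kappa=1-e$.
\item The all-deleted split has $\sigma$ equal to $\mathrm{out}$ or $\mathrm{del}$, counter $\gamma=0$, $\delta=1-e$ and $\kappa=0$.
\end{itemize}
When $e=1$ both splits collapse to the single type $(\mathrm{out},0,0,0)$. Indeed, $F\cap V(T_v^{\,i})=\emptyset$ then allows only the empty partition, which gives the first case of the statement.

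For $e=0$ we classify an arbitrary partial split $(D',K')$ by the pair $(\delta,\kappa)$ and by $\sigma$.
\begin{itemize}
\item If $\delta=0$, then $D'=\emptyset$ and the split is the all-kept split.
\item If $\kappa=0$, then $K'=\emptyset$ and the split is the all-deleted split. Note that $\sigma=\mathrm{keep}$ forces $\kappa=1$, and when $b=1$ the vertex $v$ is in either $D'$ or $K'$.
\item If $\delta=\kappa=1$ and $v\in K'$, then $\alpha=1$ by definition, and the type $(\mathrm{keep},0,1,1)$ is realized. This is exactly $(\mathrm{keep},0,\alpha,1)$. When $\alpha=0$, the listed $(\mathrm{keep},0,0,1)$ is just the all-kept type.
\item If $\delta=\kappa=1$ and $v\notin K'$, the counter $\gamma$ lies in the unreduced set of $M$ by definition. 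If that value was discarded in the reduction, then $2\in M$, and the type with $\gamma=2$ subsumes it by Definition~\ref{def:subsume}.
\end{itemize}
So every achievable type is subsumed by an element of $\mathcal{T}$. Conversely, each element of $\mathcal{T}$ is achieved: the first listed elements by the trivial splits, and the remaining ones by the witnessing splits from the definitions of $\alpha$ and $M$. When $b=1$, $\sigma$ is $\mathrm{del}$ exactly when $v\notin K'$, which matches the notation of the third case.

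The main point of care, rather than a real obstacle, is the interaction between reduction and equality of the reduced sets. Two sets that mutually subsume each other have identical sets of maximal elements, because subsumption is a partial order on types. One must also confirm that $\gamma$ is normalized to $0$ whenever $\sigma=\mathrm{keep}$. This is why the $\alpha$-entry carries counter $0$, and why $\lambda$ is irrelevant when $b=1$.
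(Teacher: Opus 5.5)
Your proof is correct and follows essentially the same route as the paper's proof: you verify the two trivial partial splits and their types, handle $e=1$ as the single empty partition, and for $e=0$ sort the remaining splits by their flags and by the class of $v$, so that $\alpha$ and $M$ record exactly the nontrivial types. Your two-way argument is more precise than the paper's closing sentence, which simply says that ``the displayed family is the set of achievable types''; you show that every achievable type is subsumed by an element of $\mathcal{T}$, that every element of $\mathcal{T}$ is achievable, and that subsumption is a partial order, so both sets have the same maximal elements (for instance, $(\mathrm{keep},0,0,1)$ is subsumed by $(\mathrm{keep},0,1,1)$ when $\alpha=1$).
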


\begin{proof}
    The all-deleted partition is a partial split because its kept class is empty, so that every count $|N(u)\cap K'|$ vanishes; its type is $(\mathrm{del},0,1,0)$ when $b=1$, and $(\mathrm{out},0,\mathbf{1}[F\cap V(T_v^{\,i})\neq\emptyset],0)$ when $b=0$. The all-kept partition is a partial split because its conditions are exactly the conditions defining local fortification; its counter equals $\lambda$, and its type is $(\mathrm{keep},0,0,1)$ when $b=1$ and $(\mathrm{out},\lambda,0,\mathbf{1}[F\cap V(T_v^{\,i})\neq\emptyset])$ when $b=0$. If $e=1$, the two partitions coincide with $(\emptyset,\emptyset)$, whose type is $(\mathrm{out},0,0,0)$, and no other partition of the empty set exists. If $e=0$, then every partial split other than these two has both classes nonempty and therefore carries the flags $\delta=\kappa=1$; among these, the ones placing $v$ in the kept class all have the single type $(\mathrm{keep},0,1,1)$, whose presence is recorded by $\alpha$, and the ones placing $v$ outside the kept class are recorded, through their counters, by $M$. A type carrying the flags $(1,1)$ is subsumed only by a type carrying the same flags, so the reduction of $M$ in Definition~\ref{def:state} discards exactly the subsumed types among the latter, and the displayed family is the set of achievable types.
\end{proof}

The next lemma is the transition of the dynamic program: it merges the state of a configuration on $T_v^{\,i}$ with the state of a configuration on the sub-tree of the next child.

\begin{lemma}\label{lem:merge}
	Let $(F,T_v^{\,i})$ be locally fortified with state $(b,\lambda,e,\alpha,M)$, and let $(F,T_{c_{i+1}})$ be locally fortified with state $(b',\lambda',e',\alpha',M')$. Then $(F,T_v^{\,i+1})$ is locally fortified if and only if $b'=1$ or $\lambda'+b\neq1$, and in that case its counter is $\min\{\lambda+b',2\}$, its emptiness bit is $e\wedge e'$, and its set of achievable boundary types, reduced by subsumption, is obtained by discarding the subsumed elements of
	$$
	   \bigl\{\,\mu(\tau,\tau'):\ \tau\in\mathcal{T}(b,\lambda,e,\alpha,M),\ \tau'\in\mathcal{T}(b',\lambda',e',\alpha',M'),\ \sigma'=\mathrm{keep} \text{ or } \gamma'+\mathbf{1}[\sigma=\mathrm{keep}]\neq1\,\bigr\},
	$$
	where $\tau=(\sigma,\gamma,\delta,\kappa)$, $\tau'=(\sigma',\gamma',\delta',\kappa')$, and $\mu(\tau,\tau')$ is the type whose first entry is $\sigma$, whose counter is $0$ if $\sigma=\mathrm{keep}$ and $\min\{\gamma+\mathbf{1}[\sigma'=\mathrm{keep}],2\}$ otherwise, and whose flags are $\delta\vee\delta'$ and $\kappa\vee\kappa'$. The components $\alpha$ and $M$ of the new state are read off from this set as in Definition~\ref{def:state}.
\end{lemma}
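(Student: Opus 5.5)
The plan is to verify the merge lemma in three parts: local fortification, the two bookkeeping components (counter and emptiness bit), and the set of achievable boundary types. Throughout, the key fact is that $V(T_v^{\,i+1})$ is the disjoint union of $V(T_v^{\,i})$ and $V(T_{c_{i+1}})$, and that the only edge between them is $vc_{i+1}$.

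\textbf{Local fortification.} Consider any $u\in V(T_v^{\,i+1})\setminus(F\cup\{v\})$. Either $u$ lies in $V(T_v^{\,i})\setminus\{v\}$, or $u$ lies in $V(T_{c_{i+1}})\setminus\{c_{i+1}\}$, or $u=c_{i+1}$. In the first two cases the neighbourhood of $u$ is unchanged, so the condition $|N(u)\cap F|\neq1$ is exactly the one supplied by the two input configurations. The only new vertex to check is $u=c_{i+1}$ when $b'=0$. Its neighbours are its children together with $v$, so $|N(c_{i+1})\cap F|=\lambda'+b$, provided the cap on $\lambda'$ is harmless. It is, because a cap at $2$ still distinguishes the value $1$. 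This gives the criterion ``$b'=1$ or $\lambda'+b\neq1$''. The counter increases by $b'=\mathbf{1}[c_{i+1}\in F]$ and is then capped, and emptiness is the conjunction $e\wedge e'$.

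\textbf{Achievable types.} Here we use restriction and gluing. Any partial split $(D',K')$ of $(F,T_v^{\,i+1})$ restricts to partial splits $s$ on $T_v^{\,i}$ and $s'$ on $T_{c_{i+1}}$. The intrinsic conditions all transfer, with one exception: the pending condition at $c_{i+1}$, which is now a genuine condition whenever $c_{i+1}\notin K'$. That condition reads $\gamma'+\mathbf{1}[v\in K']\neq1$, which is exactly the filter in the displayed set. Conversely, if $s$ and $s'$ are partial splits satisfying this filter, their union is a partial split of $(F,T_v^{\,i+1})$. The type of the union is $\mu(\tau,\tau')$: the status $\sigma$ is inherited from $v$, the counter at $v$ gains one exactly when $c_{i+1}$ is kept, and the flags combine by disjunction.

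\textbf{Reduction to $\mathcal{T}$.} The remaining step, which I expect to be the main obstacle, is to justify enumerating only the \emph{reduced} type sets $\mathcal{T}(\cdot)$ from Lemma~\ref{lem:state} rather than all achievable types. The approach is a monotonicity argument for $\mu$ and the filter under subsumption. Suppose $\tau_1$ subsumes $\tau_2$. Then $\tau_1$ passes the filter whenever $\tau_2$ does: $\sigma$ is equal, and for the counter either $\gamma_1'=\gamma_2'$ or $\gamma_1'=2$, and the value $2+x\neq1$ always holds. Moreover $\mu(\tau_1,\tau')$ subsumes $\mu(\tau_2,\tau')$: the flags are monotone under disjunction, and the counter either stays equal or saturates at $2$. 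The same holds symmetrically in the second argument. Hence the reduced set of the merged configuration equals the reduction of $\mu$ applied to the reduced input sets. Finally, $\alpha$ and $M$ are read off from this reduced set as in Definition~\ref{def:state}, which is consistent with Lemma~\ref{lem:state}.
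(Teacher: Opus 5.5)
Your proposal is correct and follows the paper's proof essentially step for step. The only new fortification condition is at $c_{i+1}$, where the cap at two is harmless. Restriction and gluing identify partial splits of $(F,T_v^{\,i+1})$ with pairs of partial splits that pass the filter, and the glued split has type $\mu(\tau,\tau')$. Monotonicity of the filter and of $\mu$ under subsumption, in each argument, justifies working with the sets $\mathcal{T}(\cdot)$ of Lemma~\ref{lem:state}.

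One cosmetic point: in your monotonicity paragraph, the filter depends on the first argument only through $\sigma$. The counter-based reasoning you give there ($\gamma'=2$ always passes) therefore belongs to the second argument.
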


\begin{proof}
	Local fortification of $(F,T_v^{\,i+1})$ imposes the conditions of the two smaller configurations together with the condition at $c_{i+1}$, which is no longer omitted. If $c_{i+1}\notin F$, then $|N(c_{i+1})\cap F|$ equals the number of children of $c_{i+1}$ lying in $F$, plus $b$; this count is exact when $\lambda'\in\{0,1\}$ and is at least two when $\lambda'=2$, so the requirement that it differ from one is equivalent to $\lambda'+b\neq1$. This proves the first assertion, and the updates of the counter and of the emptiness bit are immediate.

	For the types, restricting a partition of $F\cap V(T_v^{\,i+1})$ to $V(T_v^{\,i})$ and to $V(T_{c_{i+1}})$ yields a pair of partitions, and conversely every such pair glues to a partition. We claim that the glued partition is a partial split precisely when both restrictions are partial splits and the displayed side condition holds. Indeed, for $u\in V(T_v^{\,i})\setminus\{v\}$ and for $u\in V(T_{c_{i+1}})\setminus\{c_{i+1}\}$ the conditions of Definition~\ref{def:partial} are intrinsic to the respective sub-tree, because $N(u)$ is contained in it. The only condition present in $T_v^{\,i+1}$ but omitted from both restrictions is the one at $c_{i+1}$ in the case that $c_{i+1}$ lies outside the kept class, and it reads $|N(c_{i+1})\cap K|\neq1$; its left-hand side is the child's counter plus $\mathbf{1}[v\in K]$, exactly below the cap and at least two at the cap, so the condition is equivalent to $\gamma'+\mathbf{1}[\sigma=\mathrm{keep}]\neq1$. The boundary type of the glued partial split is $\mu(\tau,\tau')$: the class of $v$ is unchanged, the counter increases by one exactly when $c_{i+1}$ is kept, and each class of the union is nonempty exactly when it is nonempty in one of the two parts.

	Finally, gluing respects subsumption. Fix $\tau$ and let $\tau'$ subsume $\tau''$. If the pair $(\tau,\tau'')$ satisfies the side condition, then so does $(\tau,\tau')$, because $\sigma'=\sigma''$ and either $\gamma'=\gamma''$ or $\gamma'=2$, and a counter equal to two satisfies the side condition outright; moreover $\mu(\tau,\tau')$ subsumes $\mu(\tau,\tau'')$, since the classes agree, the counters agree or the former is capped, and the flags are monotone. The symmetric statement holds in the first argument. Hence computing with the reduced sets of Lemma~\ref{lem:state} produces the reduced set of the union, and by Lemma~\ref{lem:compress} no information about extendability is lost.
\end{proof}

For $i=0$ the configuration $(F,T_v^{\,0})$ is locally fortified, and its state is $(1,0,0,0,\emptyset)$ if $v\in F$ and $(0,0,1,0,\emptyset)$ if $v\notin F$: the only partial splits of a single vertex are $(\{v\},\emptyset)$ and $(\emptyset,\{v\})$ in the first case and $(\emptyset,\emptyset)$ in the second, with the types prescribed by Lemma~\ref{lem:state}.

\begin{lemma}\label{lem:root}
	Let $(F,T_r)$ be locally fortified with state $(b,\lambda,e,\alpha,M)$. Then $F$ is a minimal fort of $T$ if and only if $b=1$ or $\lambda\neq1$; not both $b=0$ and $e=1$; $\alpha=0$; and $M\subseteq\{1\}$.
\end{lemma}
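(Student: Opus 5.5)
The statement combines two independent facts: $F$ is a fort exactly when the first two conditions hold, and, given that, $F$ is minimal exactly when $\alpha=0$ and $M\subseteq\{1\}$. I would prove them in this order.

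For the fort part I use the observation made just before Definition~\ref{def:partial}. Since $(F,T_r)$ is locally fortified, $F$ is a fort of $T$ if and only if $F\neq\emptyset$ and either $r\in F$ or $|N(r)\cap F|\neq1$. Because $T_r=T$, the emptiness bit $e$ records $F=\emptyset$. At the root the children of $r$ are all of $N(r)$, and $\lambda$ is the capped count $\min\{|N(r)\cap F|,2\}$. The test $|N(r)\cap F|\neq 1$ is therefore exactly $\lambda\neq1$, and $b=1$ encodes $r\in F$. If $b=0$ and $e=1$, then $F=\emptyset$, which is excluded. Conversely $b=1$ already gives $F\neq\emptyset$.

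For minimality, assume $F$ is a fort and invoke Lemma~\ref{lem:split}: $F$ is minimal if and only if it has no valid split. At the root, $F\setminus V(T_r)=\emptyset$, so every split $(D,K)$ of $F$ is a partition of $F\cap V(T_r)$. Comparing definitions, $(D,K)$ is valid if and only if three things hold. First, it is a partial split of $(F,T_r)$. Second, the pending condition holds: $r\in K$, or $|N(r)\cap K|\neq1$. Third, $D\neq\emptyset$ and $K\neq\emptyset$. The pending condition is the only fort condition not already contained in Definition~\ref{def:partial}.

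Next I read off the achievable types from Lemma~\ref{lem:state}.
\begin{itemize}
\item A valid split with $r\in K$ exists precisely when the type $(\mathrm{keep},0,1,1)$ is achievable, i.e.\ $\alpha=1$. Here the pending condition is vacuous, and since $r\in K$ we have $\kappa=1$ automatically.
\item A valid split with $r\notin K$ has both classes nonempty and counter $\gamma=\min\{|N(r)\cap K|,2\}$ (at the root the children of $r$ are all of $N(r)$). The pending condition then reads $\gamma\neq1$. Such a split exists if and only if the unreduced counter set contains $0$ or $2$.
\end{itemize}

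The one delicate point is the reduction of $M$. If $2$ was attained, the reduced $M$ equals $\{2\}$. Otherwise $M$ is unchanged, so it contains $0$ exactly when $0$ was attained. Hence "the unreduced set contains $0$ or $2$" is equivalent to $M\not\subseteq\{1\}$, using the five possible forms of $M$ listed after Definition~\ref{def:state}. Lemma~\ref{lem:compress} justifies that discarding subsumed types loses nothing.

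Putting the two cases together, $F$ has no valid split exactly when $\alpha=0$ and $M\subseteq\{1\}$, which completes the equivalence. The only real obstacle is the careful bookkeeping at the root: checking that $\lambda$ and $\gamma$ compute the full neighborhood counts there, and that the reduction of $M$ does not hide a counter value of $0$.
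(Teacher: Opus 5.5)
Your proposal is correct and follows the paper's proof closely. You identify the first two conditions with $F$ being a nonempty fort, via local fortification and the root condition. You then use Lemma~\ref{lem:split} to sort valid splits by whether $r\in K$, which $\alpha$ captures, or $r\notin K$, which requires a counter value $0$ or $2$ in $M$. Like the paper, you observe that the reduction of $M$ discards $0$ only when $2$ is present. Your final appeal to Lemma~\ref{lem:compress} is unnecessary, since you verify the reduced/unreduced equivalence for $M$ directly, but it does no harm.
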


\begin{proof}
	The first condition says that $r\in F$ or $|N(r)\cap F|\neq1$, which together with local fortification says exactly that every vertex outside $F$ has a number of neighbors in $F$ different from one; the second says $F\neq\emptyset$. Jointly they say that $F$ is a fort, and by Lemma~\ref{lem:split} it remains to prove that, for a fort, the last two conditions say that no valid split exists. A valid split of $F$ is a partial split of $(F,T_r)$ with both classes nonempty that satisfies in addition the pending condition at $r$; as $r$ has no parent, that condition reads $|N(r)\cap K|\neq1$ and holds automatically when $r\in K$. A partial split placing $r$ in the kept class with both classes nonempty exists precisely when $\alpha=1$. A partial split placing $r$ outside the kept class with both classes nonempty and counter $\gamma$ satisfies the pending condition precisely when $\gamma\neq1$, and such a split exists precisely when $M$ contains $0$ or $2$. Because the reduction of $M$ discards the value $0$ only in the presence of the value $2$, the reduced set contains $0$ or $2$ if and only if the unreduced one does, and this is the negation of $M\subseteq\{1\}$.
\end{proof}

The dynamic program maintains, for every vertex $v$ and every state $s$, the number $A[v][s]$ of sets $F\cap V(T_v)$ whose configuration on $T_v$ is locally fortified and has state $s$. Lemmas~\ref{lem:state} and~\ref{lem:merge} show that the state of a configuration on $T_v^{\,i+1}$ is determined by the states of its two restrictions, so that configurations sharing a state may be aggregated and carried through the computation as a single count. The procedure is given as Algorithm~\ref{alg:main} in Appendix~\ref{app:algorithm}.

\begin{theorem}\label{thm:dp-alg}
    Let $T$ be a tree on $n$ vertices. Algorithm~\ref{alg:main} computes $f_m(T)$ using $O(n)$ additions and multiplications of integers with $O(n)$ bits, together with $O(1)$ table entries per vertex. In the unit-cost model it therefore runs in $O(n)$ time and $O(n)$ space.
\end{theorem}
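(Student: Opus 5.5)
The proof has two parts: correctness, which combines the lemmas already proved, and complexity, which is a count of operations over the table. We root $T$ at $r$ and process the vertices in post-order. For each vertex $v$ we maintain a table indexed by the at most $46$ states of Definition~\ref{def:state}. The invariant is that after the $i$-th child of $v$ has been absorbed, the entry for state $s$ equals the number of sets $X\subseteq V(T_v^{\,i})$ such that $(X,T_v^{\,i})$ is locally fortified with state $s$. Local fortification and the state depend only on $F\cap V(T_v^{\,i})$, so this count is well defined.

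The base case $i=0$ holds by the remark following Lemma~\ref{lem:merge}: there is one set with state $(1,0,0,0,\emptyset)$ and one with state $(0,0,1,0,\emptyset)$. For the inductive step, the sets $X\subseteq V(T_v^{\,i+1})$ are in bijection with the pairs $(X\cap V(T_v^{\,i}),\,X\cap V(T_{c_{i+1}}))$. By Lemma~\ref{lem:merge}, $(X,T_v^{\,i+1})$ is locally fortified exactly when both restrictions are and the test ``$b'=1$ or $\lambda'+b\neq1$'' passes. In that case its state is a fixed function $\Phi(s,s')$ of the two restricted states, computed through Lemma~\ref{lem:state} by forming $\mathcal T(s)$ and $\mathcal T(s')$, gluing with $\mu$ under the side condition, reducing by subsumption, and reading off $\alpha$ and $M$. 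Therefore
$$A_v^{i+1}[t]=\sum_{(s,s')\,:\,\Phi(s,s')=t}A_v^{i}[s]\,A_{c_{i+1}}[s'],$$
where the sum runs only over admissible pairs, and this preserves the invariant. At the root, Lemma~\ref{lem:root} identifies which states correspond exactly to minimal forts, so the output $\sum_s A_r[s]$ taken over those states equals $f_m(T)$. The one point that needs care is that $\Phi$ is genuinely a function of the states alone. Lemma~\ref{lem:merge} establishes this, since $\mathcal T$ depends only on the state and subsumption reduction commutes with gluing.

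For complexity, each child merge costs at most $46^2$ evaluations of $\Phi$. Each evaluation handles sets of at most a constant number of types, so it takes $O(1)$ time, and it can be precomputed as a constant-size lookup table. A merge therefore uses $O(1)$ additions and multiplications. Since $\sum_v q_v=n-1$, the total over the tree is $O(n)$ arithmetic operations. Every table entry counts subsets of at most $n$ vertices, so it is at most $2^n$ and has $O(n)$ bits. The table $A_v$ can overwrite $A_v^{i}$ in place, and a child's table can be discarded once it has been merged. This gives $O(1)$ entries per vertex, hence $O(n)$ entries in total and $O(n)$ space in the unit-cost model, with the post-order traversal itself taking linear time.

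I expect the main obstacle to be checking that the state is closed under the transition. Specifically, the reduced type set of the merged configuration, and hence its $\alpha$ and $M$, must be recoverable from the reduced sets rather than the full ones. This is the subsumption-compatibility argument at the end of Lemma~\ref{lem:merge}. The remaining step is to confirm that the root reading in Lemma~\ref{lem:root} uses only reduced information; it does, because $M$ loses the value $0$ only when the value $2$ is present.
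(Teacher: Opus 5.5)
Your proposal is correct and follows essentially the same route as the paper. It uses induction along the post-order with the invariant that the table counts locally fortified configurations on $T_v^{\,i}$ by state, Lemma~\ref{lem:merge} for the transition (the bijection between subsets of $V(T_v^{\,i+1})$ and pairs of restrictions makes the counts multiply), Lemma~\ref{lem:root} for the acceptance test, and the same bound of $O(46^2)$ operations per child over $n-1$ merges with entries at most $2^n$. One minor point: the merge cannot literally overwrite $A_v^{\,i}$ in place, since the old table is read throughout the step, but a second constant-size buffer (the $B'$ of Algorithm~\ref{alg:main}) suffices and leaves the bounds unchanged.
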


\begin{proof}
	We show by induction on the height of $v$ that the table $A[v]$ is correct. For $T_v^{\,0}$ the two configurations and their states are those computed above, which is what the initialization records. For the inductive step, Lemma~\ref{lem:merge} determines which pairs consisting of a configuration on $T_v^{\,i}$ and a configuration on $T_{c_{i+1}}$ combine into a locally fortified configuration on $T_v^{\,i+1}$, and determines the resulting state; since $V(T_v^{\,i+1})$ is the disjoint union of $V(T_v^{\,i})$ and $V(T_{c_{i+1}})$, every configuration on $T_v^{\,i+1}$ arises by restriction from exactly one such pair, and the counts of the two parts multiply. Summing the products over all compatible pairs of states with a common merged state is therefore correct, and after $q_v$ steps the table $A[v]$ is correct. By Lemma~\ref{lem:root}, summing $A[r][s]$ over the accepting states $s$ counts every minimal fort exactly once and nothing else.

	For the complexity, each incorporation of a child ranges over pairs drawn from a state space of size at most $46$ and performs a bounded number of operations on each pair, since $\mathcal{T}$ and $\mu$ are functions between sets of bounded size and may be precomputed as lookup tables. The number of incorporations is $\sum_{v\in V}q_v=n-1$, and computing a post-order traversal costs $O(n)$, so the algorithm performs $O(n)$ arithmetic operations. Every table entry counts subsets of $V(T)$ and is therefore bounded by $2^{n}$, so each occupies $O(n)$ bits.
\end{proof}

\begin{remark}\label{rem:paths}
	Paths provide a closed form against which the output may be checked. Every nonempty fort of $P_n$ contains both endpoints, and a vertex set containing them is a fort if and only if its complement is an independent set of the interior path on $n-2$ vertices; the correspondence reverses inclusion, so the minimal forts of $P_n$ are the complements of the maximal independent sets of $P_{n-2}$. Writing $a_m$ for the number of maximal independent sets of $P_m$, one has $a_m=a_{m-2}+a_{m-3}$ with $a_0=a_1=1$ and $a_2=2$, whence $f_m(P_n)=a_{n-2}$ takes the values $1,1,2,2,3,4,5,7,9$ for $n=2,\ldots,10$. For the double star of Remark~\ref{ex:notminimal} the six minimal forts are the six pairs of leaves sharing a support vertex.
\end{remark}
 
\section*{Acknowledgments}

Aida Abiad is supported by NWO (Dutch Research Council) through grant VI.Vidi.213.085.

\section*{Declaration of AI Use}

During the preparation of this manuscript, the authors used Anthropic's Claude Fable 5 model to verify the results and to improve the clarity and quality of the writing.


\printbibliography

@article{cameron2025minimal,
  title={On the minimal forts of trees},
  author={Cameron, Thomas R and Li, Kelvin},
  journal={arXiv preprint arXiv:2512.12874},
  year={2025}
}

@article{jacob2025well,
  title={Well-failed graphs},
  author={Jacob, Bonnie},
  journal={arXiv preprint arXiv:2501.19357},
  year={2025}
}

@article{dat2026number,
  title={On the Number of Zero Forcing Minimal Forts on Trees},
  author={Dat, Nguyen Hoang and Kenter, Franklin HJ},
  journal={arXiv preprint arXiv:2605.07298},
  year={2026}
}

@article{hicks2022computational,
  title={Computational and theoretical challenges for computing the minimum rank of a graph},
  author={Hicks, Illya V and Brimkov, Boris and Deaett, Louis and Haas, Ruth and Mikesell, Derek and Roberson, David and Smith, Logan},
  journal={INFORMS Journal on Computing},
  volume={34},
  number={6},
  pages={2868--2872},
  year={2022},
  publisher={INFORMS}
}

@article{furst2025compatible,
  title={Compatible Forts and Maximum Nullity of a Graph},
  author={Furst, Veronika and Hutchens, John and Mitchell, Lon and Zhang, Yaqi},
  journal={Graphs and Combinatorics},
  volume={41},
  number={3},
  pages={56},
  year={2025},
  publisher={Springer}
}

@article{becker2025number,
  title={On the number of minimal forts of a graph},
  author={Becker, Paul and Cameron, Thomas R and Hanely, Derek and Ong, Boon and Previte, Joseph P},
  journal={Graphs and Combinatorics},
  volume={41},
  number={1},
  pages={25},
  year={2025},
  publisher={Springer}
}

@article{wagner2013note,
  title={A note on the number of dominating sets of a graph},
  author={Wagner, Stephan},
  journal={Util. Math},
  volume={92},
  pages={25--31},
  year={2013}
}

@article{brimkov2021improved,
  title={Improved computational approaches and heuristics for zero forcing},
  author={Brimkov, Boris and Mikesell, Derek and Hicks, Illya V},
  journal={INFORMS Journal on Computing},
  volume={33},
  number={4},
  pages={1384--1399},
  year={2021},
  publisher={INFORMS}
}

@article{aim2008zero,
  title={Zero forcing sets and the minimum rank of graphs},
  author={AIM Minimum Rank--Special Graphs Work Group and others},
  journal={Linear algebra and its applications},
  volume={428},
  number={7},
  pages={1628--1648},
  year={2008},
  publisher={Elsevier}
}

@article{burgarth2007full,
  title={Full control by locally induced relaxation},
  author={Burgarth, Daniel and Giovannetti, Vittorio},
  journal={Physical review letters},
  volume={99},
  number={10},
  pages={100501},
  year={2007},
  publisher={APS}
}

@article{yang2013fast,
  title={Fast--mixed searching and related problems on graphs},
  author={Yang, Boting},
  journal={Theoretical Computer Science},
  volume={507},
  pages={100--113},
  year={2013},
  publisher={Elsevier}
}

@article{fallat2016complexity,
  title={On the complexity of the positive semidefinite zero forcing number},
  author={Fallat, Shaun and Meagher, Karen and Yang, Boting},
  journal={Linear Algebra and its Applications},
  volume={491},
  pages={101--122},
  year={2016},
  publisher={Elsevier}
}

@article{brimkov2019computational,
  title={Computational approaches for zero forcing and related problems},
  author={Brimkov, Boris and Fast, Caleb C and Hicks, Illya V},
  journal={European Journal of Operational Research},
  volume={273},
  number={3},
  pages={889--903},
  year={2019},
  publisher={Elsevier}
}

@article{aazami2008hardness,
  title={Hardness results and approximation algorithms for some problems on graphs},
  author={Aazami, Ashkan},
  year={2008},
  publisher={University of Waterloo}
}


\newpage
\appendix

\section{Appendix: Pseudocode and Implementation Details}\label{app:algorithm}

In the pseudocode below, $\mathcal{T}(s)$ denotes the reduced type set of the state $s$ as given by Lemma~\ref{lem:state}, $\mu$ denotes the merge operation of Lemma~\ref{lem:merge}, and $\alpha(S)$ and $M(S)$ denote the two components read off from a reduced type set $S$ as in Definition~\ref{def:state}. Both $\mathcal{T}$ and $\mu$ range over sets of bounded size and are precomputed once as lookup tables indexed by the at most $46$ states.

\begin{algorithm}[H]
	\caption{Counting the minimal forts of a tree}
	\label{alg:main}
	\begin{algorithmic}[1]
		\Require a tree $T$ rooted at $r$, with the children of each vertex ordered arbitrarily
		\Ensure the number $f_m(T)$ of minimal forts of $T$
		\State compute a post-order traversal of $T$
		\ForEach{vertex $v$ in post-order}
			\State $B\gets\mathbf{0}$;\quad $B[(1,0,0,0,\emptyset)]\gets1$;\quad $B[(0,0,1,0,\emptyset)]\gets1$
			\ForEach{child $c$ of $v$, in the fixed order}
				\State $B'\gets\mathbf{0}$
				\ForEach{state $s=(b,\lambda,e,\alpha,M)$ with $B[s]>0$}
					\ForEach{state $s'=(b',\lambda',e',\alpha',M')$ with $A[c][s']>0$}
						\If{$b'=0$ and $\lambda'+b=1$}
							\State \textbf{continue}
						\EndIf
						\State $S\gets\bigl\{\mu(\tau,\tau'):\tau\in\mathcal{T}(s),\ \tau'\in\mathcal{T}(s'),\ \sigma'=\mathrm{keep}\text{ or }\gamma'+\mathbf{1}[\sigma=\mathrm{keep}]\neq1\bigr\}$
						\State discard from $S$ every subsumed type
						\State $e''\gets e\wedge e'$;\quad $s''\gets\bigl(b,\ \min\{\lambda+b',2\},\ e'',\ \alpha(S),\ M(S)\bigr)$
						\State $B'[s'']\ \mathrel{{+}{=}}\ B[s]\cdot A[c][s']$
					\EndFor
				\EndFor
				\State $B\gets B'$
			\EndFor
			\State $A[v]\gets B$
		\EndFor
		\State $f\gets0$
		\ForEach{state $s=(b,\lambda,e,\alpha,M)$ with $A[r][s]>0$}
			\If{$(b=1$ or $\lambda\neq1)$ and not $(b=0$ and $e=1)$ and $\alpha=0$ and $M\subseteq\{1\}$}
				\State $f\ \mathrel{{+}{=}}\ A[r][s]$
			\EndIf
		\EndFor
		\State \Return $f$
	\end{algorithmic}
\end{algorithm}

An open-source Python implementation of Algorithm~\ref{alg:main} is publicly available at the GitHub repository \url{https://github.com/sinaqane/minimal-forts}. It was verified against a brute-force enumeration that lists all forts of a tree and extracts minimal ones by comparing each fort with all forts of smaller cardinality, exhaustively over all trees with at most eleven vertices and every choice of root, as well as several thousand random trees with up to sixteen vertices.

\end{document}